\newtheorem{lemma}{Lemma}
\newtheorem{thm}{Theorem}
\newtheorem{prop}{Proposition}
\newtheorem{rem}{Remark}
\DeclarePairedDelimiter{\norm}{\lVert}{\rVert}
\DeclarePairedDelimiter{\abs}{\vert}{\vert}
\title{Convergence rates of RLT and Lasserre-type hierarchies for the generalized moment problem over the simplex and the sphere}
\author{
	Felix Kirschner
	\thanks{Tilburg University,	f.c.kirschner@tilburguniversity.edu   } 
	\and 
	\textbf{Etienne de Klerk}
	\thanks{Tilburg University,	e.deklerk@tilburguniversity.edu
	\newline This work is supported by the European Union’s Framework Programme for Research and Innovation Horizon 2020 under the Marie Sk\l odowska-Curie grant agreement N. 813211 (POEMA).}
}
\begin{document}
	\maketitle
	
	\begin{abstract}
		We consider the generalized moment problem (GMP) over the simplex and the sphere. This is a rich setting and it contains NP-hard
		problems as special cases, like constructing optimal cubature schemes and rational optimization. 
		Using the Reformulation-Linearization Technique (RLT) and Lasserre-type hierarchies, relaxations of the problem are introduced and analyzed. 
		For our analysis we assume throughout the existence of a dual optimal solution as well as strong duality.
		For the GMP over the simplex we prove a convergence rate of $O(1/r)$ for a linear programming, RLT-type hierarchy, 
		where $r$ is the level of the hierarchy, using a quantitative version of P\'olya's Positivstellensatz. As an 
		extension of a recent result by Fang and Fawzi [\textit{Math. Program.}, 2020, https://doi.org/10.1007/s10107-020-01537-7] we 
		prove the Lasserre hierarchy of the GMP [\textit{Math. Program.}, Vol. 112, 65-92, 2008] over the sphere has a convergence rate of $O(1/r^2)$. 
		Moreover, we show the introduced linear RLT-relaxation is a generalization of a hierarchy for minimizing forms of degree $d$ over the
		simplex,  introduced by de Klerk, Laurent and Parrilo [\textit{J. Theoretical Computer Science}, Vol. 361, 210-225, 2006].
		\keywords{Generalized moment problem with polynomials \and linear and semidefinite programming hierarchies}
	\end{abstract}
	
	\section{Introduction}
	For a compact set $K \subset \mathbb{R}^n$ let $\mathcal{M}(K)$ denote the (infinite-dimensional) vector space of 
	signed finite Borel measures with support contained in $K$. Let $[m]=\{1, \dots, m\}$ for $m \in \mathbb{N}$. The generalized moment problem (GMP) is an optimization problem of the following form:
	\begin{equation}\label{primal}
	\begin{aligned}
	\text{val} := \inf_{\mu \in \mathcal{M}(K)_+} & \int_{K} f_0(\textbf{x}) \mathrm{d}\mu(\textbf{x}) \\
	\text{s.t. } &\int_{K} f_i(\textbf{x}) \mathrm{d}\mu(\textbf{x}) = b_i \; \forall i \in [m]  \\
	& \int_{K}  \mathrm{d}\mu(\textbf{x}) \le 1,
	\end{aligned}
	\end{equation}
	where
	$m \in \mathbb{N}, b_i \in \mathbb{R}$ for all $i \in [m]$, $\mathcal{M}(K)_+$ is the convex cone of positive finite Borel measures supported on $K$, and
	$f_0, f_1, \dots, f_{m}$ are integrable over $K$ with respect to all $\mu \in \mathcal{M}(K)_+$.
	We will always assume the GMP \eqref{primal} has a feasible solution, which implies that it has an optimal solution as well (see Theorem \ref{cor1}).
	
	The constraint $\int_{K}  \mathrm{d}\mu(\textbf{x}) \le 1$ essentially means that we know an upper  bound on the measure of $K$ for the optimal solution, since, in this case,
	we may scale the functions $f_i$ a priori to satisfy this condition.
	
	The GMP is a conic linear optimization problem whose duality theory is well understood, see e.g. \cite{shapiro}.
	A wide range of optimization problems can be modeled as an instance of the GMP.
	The list includes problems from optimization, probability, financial economics and optimal control to name only a few, see e.g. \cite{lasserre3}.
	For \emph{polynomial data}, i.e., $f_i \in \mathbb{R[\textbf{x}]}$ for all $i = 0,1, \dots, m$ and the set $K$
	being a basic closed semialgebraic set, Lasserre \cite{lasserreGMP} introduced a monotone nondecreasing hierarchy of semidefinite programming (SDP) 
	relaxations of (\ref{primal}). For a survey on SDP-based approximation hierarchies and their error analysis, see \cite{etmon2}.
	
	In this paper, we will consider the case where $K$ is the standard (probability) simplex
	\[ \Delta_{n-1} = \left \{ \textbf{x} \in \mathbb{R}_+^n : x_1+ \dots + x_n = 1 \right\}, \]
	where $\mathbb{R}^n_+$ is the nonnegative orthant, or the Euclidean sphere
	\[ \mathcal{S}^{n-1} = \left \{ \textbf{x} \in \mathbb{R}^n : \norm{\textbf{x}}_2^2 = x_1^2 + \dots + x_n^2 = 1 \right \}. \]
	Our main result is to establish a rate of convergence for the Lasserre hierarchy \cite{lasserreGMP} for the GPM on the sphere, and for a related,
	RLT-type linear programming hierarchy for the GPM on the simplex. This RLT hierarchy is in fact a generalisation of LP hierarchies for polynomial
	optimization on the simplex, as introduced by Bomze and De Klerk \cite{deklerk2}, and
	De Klerk, Laurent and Parrilo \cite{PTAS}.

	\textbf{Outline of the paper.} First we introduce some notation in section \ref{notation}. In section \ref{dualtheory} we 
	review the duality theory of the GMP. 
	A brief overview of possible applications of our setting is given in section \ref{applications}. 
	For $K$ the simplex we introduce a linear relaxation hierarchy in this setting in section \ref{chapterLP} and prove a convergence rate of $O(1/r)$.
	Section \ref{sec:Lasserre on sphere} contains the new convergence analysis of the Lasserre \cite{lasserre3} SDP hierarchies of the GPM on  the sphere.
	In Section \ref{sec:limiting behavior} we take a mathematical view of how the optimal measure is obtained in the limit as the level of the hierarchies approaches infinity.
	In section \ref{conclusion} we explain how our LP hierarchy is a generalization of an approximation hierarchy for the problem of minimizing a form of degree $d$ over the simplex introduced by De Klerk, Laurent and Parrilo \cite{PTAS} based on earlier results obtained by Bomze, De Klerk \cite{deklerk2}.
	
	\subsection{Notation}\label{notation}
	Let $\mathbb{N}= \{ 0, 1, 2, \dots \}$ denote the set of nonnegative integers, $\mathbb{N}_+ = \mathbb{N} \setminus \{ 0 \}$ and $\mathbb{N}_t^n$ the set of sequences $\alpha \in \mathbb{N}^n$ for which $\vert \alpha \vert = \sum_{i = 1}^{n} \alpha_i \le t$ for $t \in \mathbb{N}$. For $\alpha \in \mathbb{N}^n$, $\textbf{x}^\alpha$ denotes the monomial $x_1^{\alpha_1}\dots x_n^{\alpha_n}$ and its degree is $\vert \alpha \vert$. The ring of multivariate polynomials in $n$ variables $\textbf{x}= (x_1, \dots, x_n)$ is denoted by $\mathbb{R}[\textbf{x}]= \mathbb{R}[x_1, \dots, x_n]$  and $\mathbb{R}[\textbf{x}]_t$ is its subspace of polynomials of degree at most $t$. The (total) degree of a polynomial is the maximal degree of its appearing monomials.
	A monomial basis vector of order $t$ is given by
	\[ [\textbf{x}]_t = (1, x_1 , \dots , x_n , x_1^2 , x_1 x_2 , \dots , x_{n-1} x_n , x_n^2, \dots, x_1^t , \dots, x_n^t )^T.
	\]
	Any polynomial $p \in \mathbb{R}[\textbf{x}]$ can be written as $p = \sum_{\alpha \in
		\mathbb{N}^n} p_\alpha \textbf{x}^\alpha $, where only finitely many $p_\alpha$ are non-zero. A polynomial $p\in \mathbb{R}[\textbf{x}]$ is a sum of squares (sos) if $p = \sum_{j=1}^{k} (h_j)^2$ for $h_j \in \mathbb{R}[\textbf{x}]$ and $k \ge 1$. The set of sos polynomials is denoted by $\Sigma[\textbf{x}]$ and the set of sos polynomials of degree at most $t$ is denoted by $\Sigma[\textbf{x}]_t$.

	\subsection{Duality of the generalized problem of moments}\label{dualtheory}
	We shall briefly discuss the duality theory associated with the GMP. For this, let $\mathcal{C}(K)$ denote the space of bounded continuous 
	functions on $K$ endowed with the supremum norm $\norm{\cdot}_\infty$. For two vector spaces $E, F$ of arbitrary dimension, 
	a non-degenerate bilinear form $\langle \rangle : E \times F \rightarrow \mathbb{R}$ is called a duality of $E$ and $F$.
	The spaces $\mathcal{M}(K)$ and $\mathcal{C}(K)$ can be put in duality by 
	defining $\langle \rangle : \mathcal{C}(K) \times \mathcal{M}(K) \rightarrow \mathbb{R}$ as
	\begin{equation}\label{duality}
	\langle f, \mu \rangle = \int_K f(\textbf{x}) \mathrm{d}\mu (\textbf{x}).
	\end{equation}
	Let again $f_0, f_1, \dots, f_m$ be continuous functions on $K$ and $b_1, \dots, b_m \in \mathbb{R}$. The dual of  (\ref{primal}) is given by
	
	\begin{equation}\label{dual}
	\begin{aligned}
	\text{val}^\prime = \sup_{(y,t) \in \mathbb{R}^m\times \mathbb{R}_+} & \sum_{i = 1}^m y_i b_i - t \\
	\text{s.t. } & f_0(\textbf{x})-\sum_{i=1}^m y_i f_i(\textbf{x}) +t \ge 0 \quad \forall \; \textbf{x} \in K.
	\end{aligned}
	\end{equation}
	Note that the dual problem \eqref{dual} is always strictly feasible, due to the constraint $\int_{K} \mathrm{d}\mu\le 1$ in the primal GMP \eqref{primal}.
	
	Weak duality holds for this pair of problems, meaning $\text{val}^\prime \le \text{val}$. The difference $\text{val}-\text{val}^\prime $ is
	called \emph{duality gap}.
	In fact, the duality gap is always zero, as the next theorem shows. Note that a zero duality gap does \textit{not} imply the existence of a dual optimal solution.

	
	\begin{thm}(see, e.g. \cite[Theorem 1.3]{lasserre3})\label{cor1}
		Assume problem \eqref{primal} is feasible. Then it has an optimal solution (the $\inf$ is attained), and $\mathrm{val} = \mathrm{val}^\prime$.
	\end{thm}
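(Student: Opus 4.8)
The plan is to prove this via the general theory of conic linear programming duality, since the GMP is an infinite-dimensional conic LP and Theorem \ref{cor1} is essentially a strong-duality/attainment statement for such programs over the cone $\mathcal{M}(K)_+$. The two assertions — attainment of the primal infimum and zero duality gap — are somewhat different in character, so I would handle them separately. For attainment, the natural tool is a compactness argument: the feasible set of \eqref{primal} should be shown to be weak-$*$ compact, after which the continuity of the objective functional $\mu \mapsto \langle f_0, \mu\rangle$ yields a minimizer. For the zero duality gap, the standard route is to invoke a general conic duality theorem (e.g. the one in Shapiro \cite{shapiro} or the conic-duality framework underlying \cite{lasserre3}) whose hypotheses reduce here to a closedness or strict-feasibility condition that we can verify.

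\textbf{Attainment of the primal optimum.} First I would observe that the constraint $\int_K \mathrm{d}\mu \le 1$ confines every feasible $\mu$ to the ball $\{\mu \in \mathcal{M}(K)_+ : \mu(K) \le 1\}$ in total-variation norm. Since $K$ is compact, the space $\mathcal{M}(K)$ is (by the Riesz representation theorem) the topological dual of $\mathcal{C}(K)$, and this norm ball is weak-$*$ compact by the Banach--Alaoglu theorem. The linear constraints $\langle f_i, \mu\rangle = b_i$ are weak-$*$ closed because each $f_i$ is continuous on the compact set $K$, hence lies in $\mathcal{C}(K)$; likewise the inequality constraint defines a weak-$*$ closed half-space. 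Therefore the feasible set is a weak-$*$ closed subset of a weak-$*$ compact set, so it is itself weak-$*$ compact and, by assumption, nonempty. The objective $\mu \mapsto \langle f_0, \mu\rangle$ is weak-$*$ continuous (again since $f_0 \in \mathcal{C}(K)$), so it attains its infimum on this set, giving an optimal measure $\mu^*$.

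\textbf{Zero duality gap.} Here I would appeal to the abstract conic LP duality theorem for the dual pair $(\mathcal{C}(K), \mathcal{M}(K))$ under the bilinear form \eqref{duality}. The cleanest sufficient condition is strict feasibility of one of the two programs together with finiteness of the optimal value. The excerpt already remarks that the \emph{dual} problem \eqref{dual} is always strictly feasible — one may take any $y$ and then choose $t$ large enough that $f_0 - \sum_i y_i f_i + t > 0$ uniformly on the compact set $K$, using boundedness of continuous functions on $K$. Combined with $\mathrm{val}' \le \mathrm{val} < \infty$ (weak duality plus feasibility of the primal), a Slater-type conic duality theorem yields $\mathrm{val} = \mathrm{val}'$, i.e. zero duality gap. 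This is precisely the setting covered by \cite[Theorem 1.3]{lasserre3}, so in practice I would cite that result and merely verify its hypotheses in our situation.

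\textbf{Main obstacle.} The delicate point is not the compactness argument but the infinite-dimensional duality: in general conic programs a zero duality gap can fail, and strict feasibility must be located on the ``correct'' side and phrased in the appropriate topology so that the separating-hyperplane/closedness argument goes through. The remark in the excerpt that strict feasibility holds for the \emph{dual} is exactly what guarantees attainment on the \emph{primal} side is compatible with no gap; I would be careful to confirm that the constraint $\int_K \mathrm{d}\mu \le 1$ (rather than an equality) is what supplies this strict feasibility and keeps the dual value finite. Since the statement is attributed to \cite[Theorem 1.3]{lasserre3}, the honest proof is to reduce to that theorem and check its assumptions, rather than to reprove general conic duality from scratch.
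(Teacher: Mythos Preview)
The paper does not actually prove this theorem: it is stated with the attribution ``(see, e.g.\ \cite[Theorem 1.3]{lasserre3})'' and no proof environment follows. So there is nothing to compare against in the paper itself; the authors treat this as a known black-box result and only \emph{use} it later (in the proofs of Lemma~\ref{lemma2} and Theorem~\ref{LPtheorem}).

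Your proposal is a correct and standard route to the result and is essentially what underlies the cited theorem. Two minor remarks. First, your two parts are not really independent: in the conic-duality framework you invoke, strict feasibility of the dual together with finiteness of $\mathrm{val}'$ already yields \emph{both} zero duality gap \emph{and} attainment of the primal optimum, so the separate Banach--Alaoglu argument is redundant (though perfectly valid). Second, in the compactness argument you should also note that the cone $\mathcal{M}(K)_+$ is itself weak-$*$ closed (it is the polar of $\mathcal{C}(K)_+$), since feasibility includes the positivity constraint; you implicitly use this but do not state it. With these small tweaks your sketch is complete, and it is exactly the kind of verification-of-hypotheses-plus-citation that the paper's bare reference to \cite{lasserre3} is standing in for.
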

	
	
	We continue by recalling a sufficient condition for a dual optimal solution to exist.
	
	\begin{thm}(see, e.g. \cite[Proposition 2.8]{shapiro})\label{suffcond}
		Suppose  problem \eqref{primal} is feasible. If
		\begin{equation}\label{condition1}
		b \in \text{\normalfont int} ((\langle f_1, \mu \rangle, \dots, \langle f_{m} , \mu \rangle ) :  \mu \in \mathcal{M}(K)_+ )
		\end{equation}
		then  the set of optimal solutions of (\ref{dual}) is nonempty and bounded.
	\end{thm}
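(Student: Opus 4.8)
The plan is to deduce the statement from strong duality (Theorem \ref{cor1}) together with the standard correspondence between optimal dual solutions and subgradients of the primal value function. Define, for $z \in \mathbb{R}^m$, the value function
\[
v(z) = \inf\Big\{ \langle f_0, \mu\rangle : \langle f_i,\mu\rangle = z_i \ (i\in[m]),\ \textstyle\int_K \mathrm{d}\mu \le 1,\ \mu\in\mathcal{M}(K)_+ \Big\},
\]
so that $v(b) = \mathrm{val}$. Since a convex combination $\lambda\mu_1 + (1-\lambda)\mu_2$ of measures feasible for right-hand sides $z_1, z_2$ is feasible for $\lambda z_1 + (1-\lambda)z_2$ (the cone $\mathcal{M}(K)_+$ and the constraint $\int\mathrm{d}\mu\le1$ are preserved under convex combinations, the equality constraints combine linearly, and the objective is linear), the function $v$ is convex. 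A direct computation shows that \eqref{dual} is exactly the Lagrangian dual of \eqref{primal} obtained by dualizing the equality constraints, the variable $t$ encoding the inequality $\int\mathrm{d}\mu\le1$. Consequently, by conjugate duality the optimal solutions of \eqref{dual} are precisely the points of $\partial v(b)$ (with the $t$-component recovered from the active constraint), so it suffices to show that $\partial v(b)$ is nonempty and bounded.

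The key step is that hypothesis \eqref{condition1} places $b$ in the interior of the set of attainable moment vectors, hence in $\mathrm{int}(\mathrm{dom}\,v)$; in particular $v$ is finite on a neighbourhood of $b$. A finite convex function on $\mathbb{R}^m$ is locally Lipschitz on the interior of its domain, so by the classical result (Rockafellar, \emph{Convex Analysis}, Thm.~23.4) its subdifferential $\partial v(b)$ is nonempty, convex and compact. Feeding this back through the correspondence of the previous paragraph yields that the optimal set of \eqref{dual} is nonempty and bounded, which is the assertion. Strong duality from Theorem \ref{cor1} (equivalently, lower semicontinuity of $v$ at $b$) is what guarantees that these subgradients are genuine optima rather than merely optima of the bidual.

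Boundedness can also be seen concretely, which clarifies where the argument really bites. The optimal set of \eqref{dual} is bounded if and only if its recession cone is trivial, i.e. if and only if the only $(\bar y,\bar t)$ with $\bar t\ge0$, $\sum_{i}\bar y_i f_i(\mathbf{x})\le\bar t$ for all $\mathbf{x}\in K$, and $\sum_i\bar y_i b_i=\bar t$ is $(\bar y,\bar t)=0$. Given such a direction, interiority of $b$ provides, for small $\epsilon>0$, a measure $\mu\in\mathcal{M}(K)_+$ with $\int\mathrm{d}\mu\le1$ realizing the moment vector $b+\epsilon\bar y$; integrating the pointwise inequality against $\mu$ gives $\sum_i\bar y_i b_i + \epsilon\norm{\bar y}^2 = \sum_i\bar y_i\langle f_i,\mu\rangle \le \bar t\int_K\mathrm{d}\mu \le \bar t = \sum_i\bar y_i b_i$, forcing $\bar y=0$ and then $\bar t=0$. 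I expect the main obstacle to be twofold: making the conjugate-duality correspondence between optimal dual solutions and $\partial v(b)$ rigorous in the infinite-dimensional space $\mathcal{M}(K)$ (this is where strong duality is essential), and being careful that the attainable-moment set in \eqref{condition1} is read together with the normalization $\int\mathrm{d}\mu\le1$ of \eqref{primal} — it is precisely the bound $\int\mathrm{d}\mu\le1$ that controls the term $\bar t\int_K\mathrm{d}\mu$ above and makes boundedness hold.
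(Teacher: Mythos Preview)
The paper does not actually supply a proof of this theorem; it is quoted from \cite[Proposition~2.8]{shapiro} as a background result, so there is no ``paper's proof'' to compare against. Your perturbation/value-function approach via $\partial v(b)$ is precisely the standard route taken in Shapiro's conic-duality framework, and the high-level logic (convexity of $v$, subgradients correspond to dual optimal $y$, interiority forces $\partial v(b)$ nonempty and compact) is sound.

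There is, however, a genuine gap that you flag but do not close: condition~\eqref{condition1} asserts that $b$ lies in the interior of the moment cone $\{(\langle f_1,\mu\rangle,\dots,\langle f_m,\mu\rangle):\mu\in\mathcal{M}(K)_+\}$ \emph{without} the normalization $\int_K\mathrm{d}\mu\le1$, whereas your value function $v$ is defined \emph{with} that constraint. The implication ``$b$ in the interior of the moment cone $\Rightarrow b\in\mathrm{int}(\mathrm{dom}\,v)$'' is false in general, and your explicit recession-cone computation likewise assumes the perturbed measure satisfies $\int\mathrm{d}\mu\le1$, which \eqref{condition1} does not give. Concretely, take $K=[0,1]$, $m=1$, $f_0\equiv0$, $f_1\equiv1$, $b_1=1$: then \eqref{primal} is feasible, $b_1=1\in\mathrm{int}(0,\infty)$ so \eqref{condition1} holds, yet the optimal set of \eqref{dual} is $\{(s,s):s\ge0\}$, which is unbounded. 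Thus the statement, read literally with the $t$-variable included, is not quite correct as written.

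What \emph{does} follow from your argument (and from Shapiro's result, which is formulated without the extra mass constraint) is that the set of optimal $y\in\mathbb{R}^m$ is nonempty and bounded; the $t$-component is then determined by $t=\max\bigl(0,\,\max_{\mathbf{x}\in K}\sum_i y_if_i(\mathbf{x})-f_0(\mathbf{x})\bigr)$ and is bounded once $y$ is. The unboundedness in the example above arises only because $f_1\equiv1$ makes the mass constraint redundant, so $t$ and $y_1$ can be shifted together. For the paper's purposes (existence of a dual optimizer $(\bar y,t)$, as invoked in Theorem~\ref{LPtheorem}) this is harmless, but your proof sketch should separate boundedness of $y$ (which your recession argument yields cleanly once you drop the spurious hypothesis $\int\mathrm{d}\mu\le1$ on $\mu$) from the recovery of $t$.
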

	
	As discussed in Lasserre \cite{lasserreGMP}, it is customary in the literature to assume that condition (\ref{condition1}) holds, 
	but in practice it may be a non-trivial task to check whether it does. We do stress, however, that condition (\ref{condition1})
	does hold for the applications discussed in the next subsection.
	
	Another result worth mentioning is that if the GMP (\ref{primal}) has an optimal solution, it has on which is finite atomic.
	
	\begin{thm}(see, e.g. \cite[Theorem 3]{etmon2})\label{atomic}
		If the GMP (\ref{primal}) has an optimal solution, then it has one which is finite atomic with at most $m$ atoms, i.e., of th form $\mu^\ast = \sum_{\ell = 1}^m \omega_\ell \delta_{\normalfont{\textbf{x}}^{(\ell)}}$ where $\omega_\ell \ge 0, \normalfont{\textbf{x}}^{(\ell)} \in K$ and $\delta_{\normalfont{\textbf{x}}^{(\ell)}}$ denotes the Dirac measure supported at $\normalfont{\textbf{x}}^{(\ell)} (\ell \in [m])$.
	\end{thm}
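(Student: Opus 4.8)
The plan is to start from an arbitrary optimal measure, whose existence is guaranteed by Theorem \ref{cor1}, and to replace it by a finitely atomic measure that preserves exactly the integrals relevant for feasibility and optimality. The engine is a Tchakaloff/Richter-type representation theorem: for any $\mu \in \mathcal{M}(K)_+$ and any finite collection of $\mu$-integrable functions $g_1,\dots,g_N$, there is an atomic measure $\nu=\sum_{\ell=1}^k \omega_\ell \delta_{\textbf{x}^{(\ell)}}$ with $k\le N$, weights $\omega_\ell>0$, and atoms $\textbf{x}^{(\ell)}\in \text{supp}(\mu)\subseteq K$ such that $\langle g_j,\nu\rangle=\langle g_j,\mu\rangle$ for every $j\in[N]$. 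Equivalently, the vector $(\langle g_1,\mu\rangle,\dots,\langle g_N,\mu\rangle)$ lies in the convex cone generated by $\{(g_1(\textbf{x}),\dots,g_N(\textbf{x})):\textbf{x}\in K\}\subseteq\mathbb{R}^N$, to which one applies Carath\'eodory's theorem for cones.

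First I would fix an optimal $\mu^\ast$ and apply this theorem to the $m$ functions $f_1,\dots,f_m$, obtaining an atomic $\nu$ with at most $m$ atoms, all lying in $\text{supp}(\mu^\ast)$, and satisfying $\langle f_i,\nu\rangle=\langle f_i,\mu^\ast\rangle=b_i$ for all $i\in[m]$. This settles the equality constraints and the nonnegativity of $\nu$ at once. It then remains to verify the two conditions that were not explicitly encoded, namely the normalization $\langle 1,\nu\rangle\le 1$ and optimality $\langle f_0,\nu\rangle=\mathrm{val}$.

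To secure optimality I would exploit that every atom of $\nu$ sits in $\text{supp}(\mu^\ast)$. Moving mass between points of $\text{supp}(\mu^\ast)$ while keeping $\langle f_i,\cdot\rangle=b_i$ fixed cannot push the objective below $\mathrm{val}$, so a two-sided perturbation argument forces $\langle f_0,\cdot\rangle$ to be locally constant and equal to $\mathrm{val}$. Concretely, I would consider directions $c=(c_\ell)$ solving the homogeneous system $\sum_\ell c_\ell f_i(\textbf{x}^{(\ell)})=0$ for $i\in[m]$, perturb the weights as $\omega_\ell\mapsto\omega_\ell\pm\varepsilon c_\ell$, and note that since $\mathrm{val}$ is the minimum, feasible perturbations in both signs force $\sum_\ell c_\ell f_0(\textbf{x}^{(\ell)})=0$; increasing $\varepsilon$ until a weight vanishes then reduces the number of atoms without leaving the optimal face.

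The delicate point is the interaction with the normalization constraint $\langle 1,\mu\rangle\le 1$, and this is precisely where the atom count $m$ (rather than $m+1$) is decided. I would split into two cases. If the constraint is slack at the current atomic measure, then small perturbations keep it slack, no extra linear condition is imposed, and the vanishing-weight reduction terminates with at most $m$ atoms. If the constraint is active, the mass becomes an additional equality, and the main obstacle is to avoid paying for it with an extra atom: I expect to resolve this by perturbing in a direction that preserves $f_1,\dots,f_m$ and $f_0$ but strictly decreases the total mass, thereby rendering the normalization slack and reducing to the previous case, after which the reduction is iterated. Establishing that such a mass-decreasing, optimality-preserving direction always exists whenever more than $m$ atoms remain is the crux of the argument; once it is in place, the bound of at most $m$ atoms follows and $\nu=\sum_{\ell=1}^{m}\omega_\ell\delta_{\textbf{x}^{(\ell)}}$ is the desired finite atomic optimal solution.
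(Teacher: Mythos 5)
The paper does not actually prove this statement; it quotes it from the survey of de Klerk and Laurent, where the argument applies Carath\'eodory's theorem for convex cones to the full moment map $\textbf{x}\mapsto(f_0(\textbf{x}),f_1(\textbf{x}),\dots,f_m(\textbf{x}))$ --- crucially \emph{including the objective} --- so that the resulting atomic measure is optimal by construction, and then saves one atom by noting that $(\mathrm{val},b)$ lies on the relative boundary of the moment cone (it minimizes the first coordinate over the fibre above $b$), hence in a proper face, hence needs at most $m$ generators. Your toolbox (Tchakaloff/Carath\'eodory for cones plus weight perturbation along null directions) is the right one, but the order in which you deploy it creates a genuine gap.

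The gap is the optimality step. Applying Tchakaloff only to $f_1,\dots,f_m$ produces a $\nu$ with the correct $b_i$, but nothing forces $\langle f_0,\nu\rangle=\mathrm{val}$ or $\nu(K)\le 1$: containment of the atoms in $\mathrm{supp}(\mu^\ast)$ does not imply optimality. A concrete failure: take two points $a,b\in K$ with $f_1(a)=1$, $f_1(b)=2$, $b_1=3/2$, $f_0(a)=0$, $f_0(b)=1$; then $\mu^\ast=\tfrac12\delta_a+\tfrac12\delta_b$ is optimal with $\mathrm{val}=\tfrac12$, while $\nu=\tfrac34\delta_b$ matches $\langle f_1,\nu\rangle=b_1$, is feasible, is supported in $\mathrm{supp}(\mu^\ast)$ with one atom --- and has objective value $\tfrac34>\mathrm{val}$. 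Your two-sided perturbation argument cannot rescue this, because it is circular here: the implication ``both signs feasible $\Rightarrow\sum_\ell c_\ell f_0(\textbf{x}^{(\ell)})=0$'' already uses that the current objective value equals $\mathrm{val}$, which is precisely what you are trying to establish; the perturbation device is the correct tool only for \emph{reducing the atom count of a measure that is already optimal}. You also correctly identify the interaction with $\langle 1,\nu\rangle\le1$ as unresolved, and the mass-decreasing direction you hope for need not exist. The clean repair is to match $f_0$ and the constant $1$ from the outset, so that optimality and the mass bound are inherited automatically, and to obtain the count $m$ from the boundary refinement of Carath\'eodory rather than from an ex-post verification.
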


	\subsection{Applications}\label{applications}
	
	\textbf{Polynomial and rational optimization.} Consider the problem of minimizing a rational function over $K$:
	
	\begin{equation}\label{ratopt}
	p^\ast = \inf_{\textbf{x} \in K} \frac{p(\textbf{x})}{q(\textbf{x})},
	\end{equation}
	where $q,p \in \mathbb{R[\textbf{x}]}$ are relatively prime and we may assume $q(\textbf{x}) > 0$ for all $\textbf{x} \in K$.
	Indeed, if $q$ changes signs on $K$, Jibetean and De Klerk \cite[Corollary 1]{jibetean} showed that $p^\ast = -\infty$.
	We will in fact make the stronger assumption that $q(\textbf{x}) \ge 1$ on $K$, i.e.\ that we know a positive lower bound on the minimum of $q$ over $K$.
	The optimization problem (\ref{ratopt}) can be modeled as a GMP:
	
	\begin{equation}\label{ratGMP}
	\mathrm{val} = \inf_{\mu \in \mathcal{M}(K)_+} \left \{ \int_{K} p(\textbf{x}) \mathrm{d}\mu(\textbf{x}) : \int_{K} q(\textbf{x})\mathrm{d}\mu(\textbf{x}) =1 \right \}.
	\end{equation}
	
	The inequality constraint $\int_{K}\mathrm{d}\mu(\textbf{x}) \le 1$ is redundant if $q(\textbf{x})\ge1 \; \forall \textbf{x} \in K$ and
	can be added to obtain a problem of form (\ref{primal}). 
	
	
	We emphasize that minimizing a quadratic polynomial over the simplex $\Delta_{n-1}$ is already NP-hard, since it contains the 
	problem of computing  the size ($\alpha(G)$) of a maximum stable set of a graph $G$. Indeed,
	for a graph $G$ with adjacency matrix adjacency matrix $A$, Motzkin and Strauss
	\cite{MotzkinStrauss} showed that
	\[ \frac{1}{\alpha(G)} = \min_{\textbf{x} \in \Delta_{n-1}} \textbf{x}^T(A + I)\textbf{x}, \]
	where $I$ is the identity matrix,
	which is a quadratic polynomial optimization problem over the simplex.
	
	Similarly, deciding convexity of a homogeneous polynomial $f$ of degree $4$ or higher is known to be NP-hard \cite{convexity}. It can be modeled as polynomial optimization problem over the sphere. A homogeneous polynomial $f$ is convex if and only if
	\[ \min_{(\textbf{x},\textbf{y}) \in \mathcal{S}^{2n-1}} \textbf{y}^T \nabla f(\textbf{x}) \textbf{y} \ge 0,  \]
	which in turn be cast as a GMP over the sphere.

	\textbf{Polynomial cubature.} Another application that goes beyond polynomial optimization is concerned with polynomial cubature rules,
	see e.g. \cite{dunkl}, \cite{jameson}. Finding polynomial cubature rules is NP-hard in general, see \cite{ruy}.
	Let $N \in \mathbb{N}$. Consider the problem of multivariate numerical integration of a function $f$ over a set $K$ with
	respect to a given (reference) measure $\mu_0  \in \mathcal{M}(K)_+$. Loosely speaking, a cubature scheme consists of
	a set of nodes $\textbf{x}^{(\ell)} \in K$  and weights $\omega_\ell \ge 0$ for $\ell \in [N]$, respectively, such that
	
	\[ \int_{K} f(\textbf{x}) \mathrm{d}\mu(\textbf{x}) \approx \sum_{\ell = 1}^N\omega_\ell f(\textbf{x}^\ell).  \]
	
	A possibility to mitigate the error in this scheme is to choose the weights and points such that the approximation is exact
	for polynomials up to some fixed degree. The problem of finding such weights and nodes can be cast as a GMP.
	Let $d \in \mathbb{N}$ and $\beta \in \mathbb{N}^n$ any vector such that $\abs{\beta} > d$. Assume the reference measure $\mu_0$ is a probability measure, otherwise set $\mu_0 \leftarrow \mu_0 / \mu_0(K)$. In the GMP given by
	
	\begin{equation}\label{cubature}
	\begin{aligned}
	\mathrm{val} := \inf_{\mu \in \mathcal{M}(K)_+} & \int_{K} \textbf{x}^\beta \mathrm{d}\mu(\textbf{x})  \\
	\text{s.t. } & \int_{K} \textbf{x}^\alpha \mathrm{d}\mu(\textbf{x}) =  \int_{K} \textbf{x}^\alpha \mathrm{d}\mu_0(\textbf{x}) \; \forall \alpha \in \mathbb{N}^n_d \\
	\end{aligned}
	\end{equation}
	the redundant constraint $\int_{K}\mathrm{d}\mu(\textbf{x})\le 1$ can be added to turn it into a GMP of form (\ref{primal}). 
	The solution $\mu^\ast$ to (\ref{cubature}) will be of the form $\mu^\ast = \sum_{\ell = 1}^N \omega_\ell \delta_{x^{(\ell)}}$, 
	where $N \le \abs{\mathbb{N}_d^n} = {{n+d}\choose{d}}$ by Theorem \ref{atomic}. This result is known as Tchakaloff's theorem \cite{tchak}.
	There is some freedom in the choice of the objective function, however, note that it should be 
	linearly independent of $\{ \textbf{x}^\alpha \}$ for $\alpha \in \mathbb{N}_d^n$. Hence, 
	our approach discussed in this paper may be applied to the problem of finding cubature rules for  measures on the simplex or sphere.

	\section{A linear relaxation hierarchy over the simplex}\label{chapterLP}
	
	A moment sequence $(y_\alpha)_{\alpha \in \mathbb{N}^n} \subset \mathbb{R}$ of a measure $\mu \in \mathcal{M}(K)$ is an infinite sequence such that
	\[ y_\alpha = \int_K \textbf{x}^\alpha \mathrm{d}\mu(\textbf{x}) \; \forall \alpha \in \mathbb{N}^n. \]
	Let $L : \mathbb{R[\textbf{x}]} \rightarrow \mathbb{R}$ be a linear operator
	\[ p(\textbf{x}) = \sum_{\alpha \in \mathbb{N}^n} p_\alpha\textbf{x}^\alpha \mapsto L(p) = \sum_{\alpha \in \mathbb{N}^n} p_\alpha y_\alpha  \]
	that maps monomials to their respective moments.
	Thus, to an optimal solution $\mu^\ast$ of a GMP there is an associated linear
	functional $L^\ast$ such that $L^\ast(f_0) = \text{val}$ and $L^\ast(f_i)= b_i$ for all $i \in [m]$ as well as $L^\ast(1)\le1$. The idea of
	the relaxation we are about to introduce is to approximate the optimal solution by a sequence (hierarchy) of linear functionals $L^{(r)}$ that
	depend on $ r = 1, 2, \dots$. Let $K = \Delta_{n-1}$. For $i = 0, 1, \dots, m$ let $f_i$ be a real homogeneous polynomial of degree $d$ and let $r \ge d$.
	Consider the following linear relaxation of (\ref{primal}):
	\begin{equation}\label{relax}
	\begin{aligned}	
	\underline{f}_{\text{LP}}^{(r)} = \min \quad & L^{(r)}(f_0) \\ 
	\text{s.t.} \quad & L^{(r)}(f_i)  = b_i \quad \forall \; i \in [m] \\
	& L^{(r)}(1) \le 1 \\
	&	L^{(r)}(\textbf{x}^\alpha)  \ge 0 \quad \forall \;  \vert \alpha \vert \le r \\
	& L^{(r)}(\textbf{x}^\alpha)  = L^{(r)} \left ( \textbf{x}^{\alpha} \sum_{i=1}^{n} x_i  \right ) \quad \forall \;\vert \alpha \vert \le r-1.
	\end{aligned}
	\end{equation}
	Every feasible solution $\mu^\prime$ to (\ref{primal}) provides an upper bound for (\ref{relax}) by setting $L^{(r)}(\textbf{x}^\alpha) = \langle \textbf{x}^\alpha, \mu^\prime \rangle$. Hence, $\underline{f}_{\text{LP}}^{(r)} \le \text{val} $. To see it is a linear program (LP) note that each $L^{(r)}(\textbf{x}^\alpha)$ can be replaced by a scalar variable $y_\alpha$ and the resulting program is an LP.
	The second last constraint is reflecting the necessary condition for a positive measure $\mu$ over the simplex:
	\[ \langle \textbf{x}^\alpha, \mu \rangle = \int_{\Delta_{n-1}} \textbf{x}^\alpha \mathrm{d}\mu \ge 0 \quad \forall \alpha \in \mathbb{N}^n. \]
	The last constraint in \eqref{relax} arises from the fact that
	\[ L^{(r)}(p)=L^{(r)}(q)  \text{ if } p(\textbf{x}) = q(\textbf{x}) \quad \forall \textbf{x} \in \Delta_{n-1}. \]
	Equivalently, defining the
	ideal $\mathcal{I} = \{ \textbf{x} \mapsto p(\textbf{x})\left( 1 - \sum_{i=1}^{n} x_i \right) \; : \; p \in \mathbb{R}[\textbf{x}] \}$ 
	we
	require
	\[ L^{(r)}(p)=L^{(r)}(q) \Leftrightarrow  p = q \mod \mathcal{I}.
	\]
	
	We state two lemmas that will come in handy in our later analysis.
	\begin{lemma}\label{lemma}
		Let $r,k \in \mathbb{N}$ with $k\le r$ and let $L^{(r)}$ be a feasible solution to the linear relaxation (\ref{relax}) for some $f_0, f_1, \dots, f_m$. Then for all $\normalfont{\textbf{x}}^{\gamma}$ with $\gamma \in \mathbb{N}^n $ and $\vert \gamma \vert \le r-k$ we have
		\[ L^{(r)}\left( \normalfont{\textbf{x}}^\gamma  \right )  = L^{(r)} \left ( \normalfont{\textbf{x}}^{\gamma} \left( \sum_{i=1}^{n} x_i  \right)^k \right ). \]
	\end{lemma}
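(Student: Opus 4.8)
The plan is to exploit the fact that the last constraint in \eqref{relax} says exactly that $L^{(r)}$ annihilates the low-degree part of the ideal $\mathcal{I}$. Writing $\sigma := \sum_{i=1}^n x_i$, that constraint reads $L^{(r)}(\textbf{x}^\alpha) = L^{(r)}(\textbf{x}^\alpha\sigma)$, i.e.\ $L^{(r)}\big(\textbf{x}^\alpha(\sigma-1)\big) = 0$, for every $\alpha$ with $|\alpha| \le r-1$. By linearity of $L^{(r)}$ this extends to $L^{(r)}\big(p\cdot(\sigma-1)\big) = 0$ for every polynomial $p$ of degree at most $r-1$, since such a $p$ is a linear combination of monomials $\textbf{x}^\alpha$ with $|\alpha| \le r-1$. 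This single observation is the engine of the whole argument.

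To prove the claim I would then use the factorization
\[ \sigma^k - 1 = (\sigma - 1)\big(\sigma^{k-1} + \sigma^{k-2} + \cdots + \sigma + 1\big), \]
multiply through by $\textbf{x}^\gamma$, and set $p := \textbf{x}^\gamma\big(\sigma^{k-1} + \cdots + \sigma + 1\big)$, so that $\textbf{x}^\gamma(\sigma^k - 1) = p\cdot(\sigma - 1)$. Applying the observation above to this particular $p$ yields $L^{(r)}\big(\textbf{x}^\gamma(\sigma^k-1)\big) = 0$, which rearranges to the desired identity $L^{(r)}(\textbf{x}^\gamma) = L^{(r)}(\textbf{x}^\gamma\sigma^k)$.

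The one thing that must be checked—and the place where the degree hypothesis $|\gamma| \le r-k$ is used—is that $p$ has degree at most $r-1$, so that the observation is actually applicable. Since the highest-degree term of $p$ comes from $\textbf{x}^\gamma\sigma^{k-1}$, we have $\deg p = |\gamma| + (k-1) \le (r-k) + (k-1) = r-1$, exactly as needed. This degree bookkeeping is the only subtlety; everything else is linearity together with a polynomial identity. Alternatively, one could prove the statement by induction on $k$, repeatedly applying the $k=1$ constraint to the monomials obtained by expanding $\textbf{x}^\gamma\sigma^{k-1}$ via the multinomial theorem, but the degree count is identical and the direct factorization is cleaner.
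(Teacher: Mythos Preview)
Your proof is correct. Both your argument and the paper's rest on the same observation: the last constraint in \eqref{relax} says that $L^{(r)}$ annihilates $(\sigma-1)\cdot q$ for any polynomial $q$ of degree at most $r-1$. The paper then establishes the case $k$ iteratively, applying the $k=1$ constraint to each monomial appearing in $\textbf{x}^\gamma\sigma^{k-1}$, summing, and repeating until the exponent reaches $k$. Your route is the one-shot factorization $\sigma^k-1=(\sigma-1)(\sigma^{k-1}+\cdots+1)$, which packages the iteration into a single application of the observation to $p=\textbf{x}^\gamma(\sigma^{k-1}+\cdots+1)$; the degree check $\deg p\le r-1$ is exactly where the hypothesis $|\gamma|\le r-k$ enters, just as in the inductive version. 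The two arguments are equivalent in content, but yours avoids the explicit induction and is a bit tidier; the paper's version has the minor pedagogical advantage of making the step-by-step mechanism visible.
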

	\begin{proof}
		
			The last equality constraint in the relaxation forces
			\[  L^{(r)}(\textbf{x}^\alpha ) = L^{(r)} \left ( \textbf{x}^{\alpha} \sum_{i=1}^{n} x_i  \right ) \quad \forall \enskip \vert \alpha \vert \le r-1. \]
			Therefore, noting that $\textbf{x}^{e_j}= x_j$ we have
			\begin{align*}
			L^{(r)}(\textbf{x}^\beta \textbf{x}^{e_j})  & = L^{(r)} \left ( \textbf{x}^{\beta} \textbf{x}^{e_j} \sum_{i=1}^{n} x_i  \right ) \quad \forall \enskip \vert \beta \vert \le r-2 \\
			\Rightarrow \sum_{j =1}^{n} L^{(r)}(\textbf{x}^\beta \textbf{x}^{e_j}) & = \sum_{j =1}^{n} L^{(r)} \left ( \textbf{x}^{\beta} \textbf{x}^{e_j} \sum_{i=1}^{n} x_i  \right ) \quad \forall \enskip \vert \beta \vert \le r-2 \\
			\Leftrightarrow L^{(r)}\left( \textbf{x}^\beta \sum_{j =1}^{n} \textbf{x}^{e_j} \right) & = L^{(r)} \left ( \textbf{x}^{\beta} \sum_{j =1}^{n} \textbf{x}^{e_j} \sum_{i=1}^{n} x_i  \right ) \quad \forall \enskip \vert \beta \vert \le r-2 \\
			&= L^{(r)} \left ( \textbf{x}^{\beta} \left( \sum_{i=1}^{n} x_i \right)^2 \right ) \quad \forall \enskip \vert \beta \vert \le r-2.
			\end{align*}
			Hence,
			\[  L^{(r)}(\textbf{x}^\beta ) =  L^{(r)}\left( \textbf{x}^\beta \sum_{i=1}^{n} x_i \right )  = L^{(r)} \left ( \textbf{x}^{\beta} \left( \sum_{i=1}^{n} x_i  \right)^2 \right ) \quad \forall \quad \vert \beta \vert \le r-2. \]
			Reiterating this procedure leads us to the desired outcome.
	\end{proof}
	
	\begin{lemma}\label{lemma2}
		Consider the GMP given in (\ref{primal}) and let $(y,t) \in \mathbb{R}^m \times \mathbb{R}_+$. Then the pair $(y,t)$ is dual optimal only if
		\[ 0 = \min_{\normalfont{\textbf{x}} \in K} \left(f_0(\normalfont{\textbf{x}}) - \sum_{i=1}^m y_i f_i(\normalfont{\textbf{x}})+t \right). \]
	\end{lemma}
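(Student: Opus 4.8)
The plan is to combine dual feasibility with strong duality (Theorem~\ref{cor1}) and the existence of a primal optimal measure. Abbreviate $g(\textbf{x}) := f_0(\textbf{x}) - \sum_{i=1}^m y_i f_i(\textbf{x}) + t$. If $(y,t)$ is dual optimal it is in particular dual feasible, so $g(\textbf{x}) \ge 0$ for all $\textbf{x} \in K$; since $K$ is compact and $g$ is continuous, $\min_{\textbf{x} \in K} g(\textbf{x})$ is attained and is $\ge 0$. It therefore suffices to exhibit a point of $K$ at which $g$ vanishes, i.e.\ to prove $\min_{\textbf{x}\in K} g(\textbf{x}) \le 0$.

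To produce such a point I would use a primal optimal solution $\mu^\ast$, which exists by Theorem~\ref{cor1}, and integrate $g$ against it. Using the primal constraints $\langle f_i, \mu^\ast\rangle = b_i$ and $\langle f_0,\mu^\ast\rangle = \mathrm{val}$ together with strong duality $\mathrm{val} = \mathrm{val}^\prime = \sum_{i=1}^m y_i b_i - t$, a direct computation gives
\[ \int_K g \, \mathrm{d}\mu^\ast = \mathrm{val} - \sum_{i=1}^m y_i b_i + t\,\mu^\ast(K) = t\big(\mu^\ast(K) - 1\big). \]
Since $t \ge 0$ and the primal constraint forces $\mu^\ast(K) \le 1$, the right-hand side is $\le 0$; since $g \ge 0$ on $K$ and $\mu^\ast \in \mathcal{M}(K)_+$, the integral is also $\ge 0$. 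Hence $\int_K g\,\mathrm{d}\mu^\ast = 0$. Now if $\min_{\textbf{x}\in K} g(\textbf{x}) = \delta > 0$ then $g \ge \delta$ on $K$ and $\int_K g\,\mathrm{d}\mu^\ast \ge \delta\,\mu^\ast(K)$, which forces $\mu^\ast(K) \le 0$ and hence $\mu^\ast = 0$. Thus, as long as the primal optimum is a nonzero measure, $\min_{\textbf{x}\in K} g(\textbf{x}) = 0$, which is the claim. (Equivalently, $g$ must vanish on $\mathrm{supp}(\mu^\ast)$, so the minimum of $g$ is attained precisely on the support of the optimal measure, recovering the usual complementary slackness picture.)

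The main obstacle is the degenerate case $\mu^\ast = 0$, which forces $b = 0$ and $\mathrm{val} = 0$; then any $(y,0)$ with $f_0 - \sum_i y_i f_i$ strictly positive on $K$ would be dual optimal with $\min g > 0$, so some nondegeneracy input is genuinely needed. I would discharge it using the standing assumptions of the paper (in particular condition~\eqref{condition1}, which holds in all the applications of Section~\ref{applications} and rules out the trivial measure). An alternative, perturbation-based proof avoids measures entirely: assuming $\delta := \min_{\textbf{x}\in K} g > 0$, if $t>0$ one decreases $t$ (staying dual feasible because the slack $\delta$ absorbs the change) to strictly increase the objective $\sum_i y_i b_i - t$, contradicting optimality; the remaining boundary case $t = 0$ is handled by perturbing $y \mapsto y + \varepsilon v$ with $\sum_i v_i b_i > 0$, which for small $\varepsilon>0$ stays feasible by the strict slack $\delta$ and again improves the objective. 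This route makes transparent that the only way optimality can coexist with $\delta>0$ is the degenerate $b=0$ situation, again the crux to be excluded.
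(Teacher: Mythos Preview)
Your argument is essentially the same as the paper's: both integrate $g=f_0-\sum_i y_i f_i+t$ against a primal optimal measure $\mu^\ast$ (the paper uses the normalization $\nu=\mu^\ast/\mu^\ast(K)$, you work with $\mu^\ast$ directly) and invoke strong duality to sandwich the minimum between $0$ and $0$. In fact you are more careful than the paper, which divides by $\mu^\ast(K)$ without comment; your explicit discussion of the degenerate case $\mu^\ast=0$ and the perturbation alternative are correct and welcome additions.
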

	
	\begin{proof}
			The minimization problem
			
			\[ \min_{\textbf{x} \in K} \left( f_0(\textbf{x}) - \sum_{i = 1}^{m} y_if_i(\textbf{x}) +t \right) \]
			
			is equivalent to
			\begin{equation}\label{equLemma} \inf_{\mu \in \mathcal{M}(K)_+}\left\{\int_{K}f_0(\textbf{x}) - \sum_{i = 1}^{m} y_if_i(\textbf{x})\mathrm{d}+t \mathrm{d}\mu(\textbf{x}) : \int_{K}\mathrm{d}\mu = 1 \right\}. \end{equation}
			
			By Theorem \ref{cor1} there is no duality gap and there exists a primal optimal solution $\mu^\ast$ to the GMP (\ref{primal}). Set $\nu = \mu^\ast / \mu^\ast(K)$. Hence, $\nu$ is a probability measure and therefore a feasible solution to (\ref{equLemma}). We deduce
			\begin{align*}
			0 & \le  \min_{\textbf{x} \in K} f_0(\textbf{x}) - \sum_{i = 1}^{m} y_if_i(\textbf{x})+t \\
			&\le \int_{K} f_0(\textbf{x}) - \sum_{i = 1}^{m} \bar{y}_if_i(\textbf{x})+t \mathrm{d}\nu(\textbf{x}) \\
			&= \frac{1}{\mu^\ast(K)}\left( \int_{K} f_0(\textbf{x})\mathrm{d}\mu^\ast(\textbf{x}) -\sum_{i = 1}^{m}y_i \int_{K}f_i(\textbf{x})\mathrm{d}\mu^\ast(\textbf{x})+ t\int_K \mathrm{d}\mu^\ast(\textbf{x}) \right) \\
			&\le \frac{1}{\mu^\ast(K)} \left( \text{val}- y^Tb +t\right ) = 0,
			\end{align*}
			where the first inequality follows from the definition of the dual (\ref{dual}) of the GMP and the last equality from strong duality. 
	\end{proof}
	
	When we consider the case where $K = \Delta_{n-1}$, we may, without 
	loss of generality, assume the $f_i$ to be homogeneous of the same degree for all $i = 0, 1, \dots, m$.
	Indeed, let $f(\textbf{x}) = \sum_{j = 0}^d f_j(\textbf{x})$, where $\text{deg}(f_j)=j$. Then, $g(\textbf{x}) := \sum_{j=0}^d f_j(\textbf{x})\left(\sum_{i=1}^n x_i \right)^{d-j}$ is homogeneous of degree $d$ and $f(\textbf{x})=g(\textbf{x})$ for all $\textbf{x} \in \Delta_{n-1}$.

	\subsection{Convergence analysis}
	The following theorem is a refinement of a result by Powers and Reznick \cite{powersreznick}, obtained by de Klerk, Laurent and Parrilo \cite[Theorem 1.1]{PTAS}. It is a quantitative version of P\'olya's Positivstellensatz (see, e.g. \cite{reznicksurvey} for a survey), and it will be crucial in our analysis of the simplex case.
	\begin{thm}\label{theosimplex}
		Suppose $f \in \mathbb{R}[\normalfont{\textbf{x}}]$ is a homogeneous polynomial of degree $d$ of the form
		$ f(\normalfont{\textbf{x}}) = \sum_{\vert \alpha \vert = d} f_{\alpha}\normalfont{\textbf{x}}^\alpha . $
		Let $\varepsilon = \min_{\Delta_{n-1}} f(\normalfont{\textbf{x}})$ and define
		\begin{equation}\label{b(f)}
		B(f)= \max_{\vert \alpha \vert = d} \frac{\alpha_1 ! \dots \alpha_n ! }{d!} f_{\alpha}.
		\end{equation}Then the polynomial $(x_1 + \dots + x_n)^k f(\normalfont{\textbf{x}}) $ has only positive coefficients if
		\begin{equation}\label{k} k > \frac{d(d-1)}{2} \frac{B(f)}{\varepsilon}-d. \end{equation}
	\end{thm}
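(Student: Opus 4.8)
The plan is to compute the coefficients of $(x_1+\dots+x_n)^k f$ explicitly and to recognize them, up to a positive scalar, as a nonnegatively weighted average of the normalized coefficients $\lambda_\alpha := \frac{\alpha_1!\cdots\alpha_n!}{d!}f_\alpha$, so that $B(f)=\max_{|\alpha|=d}\lambda_\alpha$. Writing $N:=d+k$ and expanding by the multinomial theorem, the coefficient of $\textbf{x}^\gamma$ (for $|\gamma|=N$) is $\sum_{|\alpha|=d,\,\alpha\le\gamma}\binom{k}{\gamma-\alpha}f_\alpha$. Using the identity $\binom{k}{\gamma-\alpha}\binom{d}{\alpha}=\frac{k!\,d!}{\gamma!}\binom{\gamma}{\alpha}$ together with $f_\alpha=\frac{d!}{\alpha!}\lambda_\alpha$, this coefficient becomes $\frac{k!\,d!}{\gamma!}\sum_{\alpha}\lambda_\alpha\binom{\gamma}{\alpha}$. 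Since the prefactor is positive, the entire statement reduces to proving $S(\gamma):=\sum_{|\alpha|=d}\lambda_\alpha\binom{\gamma}{\alpha}>0$ for every $\gamma$ with $|\gamma|=N$.

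The guiding idea is probabilistic. The weights $\binom{\gamma}{\alpha}/\binom{N}{d}$ are exactly those of drawing $d$ indices \emph{without} replacement (hypergeometric), whereas the multinomial weights $\binom{d}{\alpha}(\gamma/N)^\alpha$ of drawing \emph{with} replacement reproduce the value $f(\gamma/N)$, because $\{\binom{d}{\alpha}\textbf{x}^\alpha\}_{|\alpha|=d}$ is a partition of unity on $\Delta_{n-1}$. Concretely, I would split $S(\gamma)$ by writing $\binom{\gamma}{\alpha}=\gamma^{\underline{\alpha}}/\alpha!$ with $\gamma^{\underline{\alpha}}:=\prod_i \gamma_i(\gamma_i-1)\cdots(\gamma_i-\alpha_i+1)$, and compare with the ordinary monomial $\gamma^\alpha=\prod_i\gamma_i^{\alpha_i}$:
\[ S(\gamma)=\sum_\alpha\lambda_\alpha\frac{\gamma^\alpha}{\alpha!}-\sum_\alpha\lambda_\alpha\frac{\gamma^\alpha-\gamma^{\underline{\alpha}}}{\alpha!}. \]
Since $\gamma/N\in\Delta_{n-1}$, the first sum equals $\frac{N^d}{d!}f(\gamma/N)\ge\frac{N^d}{d!}\varepsilon$.

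The heart of the argument is bounding the second (error) sum, and here the one-sided definition of $B(f)$ is precisely what is needed: because $\gamma^\alpha\ge\gamma^{\underline{\alpha}}\ge 0$ for $\alpha\le\gamma$, every term $(\gamma^\alpha-\gamma^{\underline{\alpha}})/\alpha!$ is nonnegative, so the sum is at most $B(f)\sum_\alpha(\gamma^\alpha-\gamma^{\underline{\alpha}})/\alpha!=B(f)\,\frac{N^d-N^{\underline{d}}}{d!}$, where I use the identities $\sum_{|\alpha|=d}\gamma^\alpha/\alpha!=N^d/d!$ and $\sum_{|\alpha|=d}\gamma^{\underline{\alpha}}/\alpha!=\binom{N}{d}=N^{\underline{d}}/d!$, with $N^{\underline{d}}=N(N-1)\cdots(N-d+1)$. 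No absolute values and no two-sided range appear, which is exactly why the constant emerges in terms of $B(f)$ alone. Combining the two estimates gives $d!\,S(\gamma)\ge N^d\varepsilon-B(f)(N^d-N^{\underline{d}})$, and the Weierstrass-type inequality $N^{\underline{d}}/N^d=\prod_{j=0}^{d-1}(1-j/N)\ge 1-\binom{d}{2}/N$, i.e.\ $N^d-N^{\underline{d}}\le\frac{d(d-1)}{2}N^{d-1}$, reduces positivity of $S(\gamma)$ to $N\varepsilon>\frac{d(d-1)}{2}B(f)$, that is $k=N-d>\frac{d(d-1)}{2}\frac{B(f)}{\varepsilon}-d$, which is exactly \eqref{k}. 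I expect the only delicate point to be this final estimate: securing the sharp constant $\frac{d(d-1)}{2}$ (rather than a cruder $d^2$) and checking that the error genuinely only sees the upper bound $B(f)$. Once the falling-factorial bookkeeping and the Weierstrass inequality are in place, the remainder is routine.
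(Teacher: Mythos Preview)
The paper does not supply its own proof of this theorem; it is quoted as \cite[Theorem 1.1]{PTAS}, a refinement of Powers--Reznick. Your argument is correct and is in fact essentially the original Powers--Reznick proof as sharpened in \cite{PTAS}: expand the product, rewrite each coefficient via the falling-factorial identity $\binom{k}{\gamma-\alpha}\binom{d}{\alpha}=\frac{k!\,d!}{\gamma!}\binom{\gamma}{\alpha}$, split into the ``with-replacement'' main term $\frac{N^d}{d!}f(\gamma/N)\ge \frac{N^d}{d!}\varepsilon$ and the nonnegative ``without-replacement'' correction, bound the latter by $B(f)\,(N^d-N^{\underline{d}})/d!$ using $\lambda_\alpha\le B(f)$ together with the Vandermonde identity $\sum_{|\alpha|=d}\binom{\gamma}{\alpha}=\binom{N}{d}$, and finish with $\prod_{j=0}^{d-1}(1-j/N)\ge 1-\binom{d}{2}/N$. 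The only small remark is that you implicitly use $\varepsilon>0$ (so that \eqref{k} is meaningful) and that $\gamma/N\in\Delta_{n-1}$ because $|\gamma|=N$; both are immediate from the hypotheses. There is nothing to compare against in the present paper beyond noting that your derivation reproduces the cited result with the same constant $\tfrac{d(d-1)}{2}$.
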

	
	We continue by stating and proving one of the main results of this paper.
	
	\begin{thm}\label{LPtheorem}
		Let {\normalfont val} be the optimal value of the GMP (\ref{primal}) for input data $K = \Delta_{n-1}, f_0, f_1, \dots, f_m \in \mathbb{R}[\normalfont{\textbf{x}}]$ homogeneous of degree $d$ and $b_1, \dots, b_m \in \mathbb{R}$. Assume there exists a dual optimal solution $(\bar{y},t)$ and let $f_{m+1}(\normalfont{\textbf{x}}):=1$ for every $\normalfont{\textbf{x}} \in \Delta_{n-1}$ and set $\bar{y}_{m+1}=-t$. Then, setting $y_0 = 1$ and $y_i = -\bar{y}_i$ for $i \in [m+1]$ we have
		\begin{equation}\label{thmequ}
		0 \le \text{{\normalfont val}} - \underline{f}_{\text{\normalfont{LP}}}^{(r)} \le \frac{\left( \sum_{i = 0}^{m+1}  B(y_if_i) +t \right) d(d-1)}{2(r-1)-d(d-1)}, \end{equation}
		for $B(\cdot)$ as in (\ref{b(f)}) and $r > d(d-1)/2+1$.
	\end{thm}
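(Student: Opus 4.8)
The lower bound $\mathrm{val} - \underline{f}_{\text{LP}}^{(r)} \ge 0$ is already recorded just after \eqref{relax}, so the whole content is the upper bound. The plan is to show that every feasible $L^{(r)}$ of \eqref{relax} obeys $\mathrm{val} - L^{(r)}(f_0) \le \delta$ for a suitable $\delta = \delta(r)$, and then to specialize to the optimal $L^{(r)}$. The starting point is to rewrite $f_0$ through the dual certificate. With $y_0 = 1$, $y_i = -\bar y_i$ (for $i \in [m]$) and $y_{m+1} = t$, set $g := \sum_{i=0}^{m+1} y_i f_i$, so that $f_0 = g - \sum_{i=1}^{m+1} y_i f_i$ and, on $\Delta_{n-1}$, $g$ coincides with the dual slack $f_0 - \sum_{i=1}^m \bar y_i f_i + t$ whose minimum over $\Delta_{n-1}$ equals $0$ by Lemma \ref{lemma2}. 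Applying $L^{(r)}$, using the constraints $L^{(r)}(f_i) = b_i$ and $L^{(r)}(1) \le 1$ from \eqref{relax} together with strong duality $\mathrm{val} = \bar y^\top b - t$ (Theorem \ref{cor1}), and discarding the nonnegative term $t\bigl(1 - L^{(r)}(1)\bigr)$, I would obtain the clean inequality $\mathrm{val} - L^{(r)}(f_0) \le -L^{(r)}(g)$.

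Next I would homogenize. Replacing the constant $f_{m+1} \equiv 1$ by $(x_1 + \dots + x_n)^d$ turns $g$ into a genuinely homogeneous degree-$d$ form $\tilde g$ that agrees with $g$ on $\Delta_{n-1}$; since their difference is $y_{m+1}\bigl(1 - (\sum_i x_i)^d\bigr) \in \mathcal{I}$, the ideal constraint in \eqref{relax} gives $L^{(r)}(g) = L^{(r)}(\tilde g)$. Moreover $\min_{\Delta_{n-1}} \tilde g = 0$, and, using subadditivity of the functional $B(\cdot)$ from \eqref{b(f)} together with $B\bigl((\sum_i x_i)^d\bigr) = 1$, one checks $B(\tilde g) \le S$, where $S := \sum_{i=0}^{m+1} B(y_i f_i) + t$. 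So it remains only to bound $-L^{(r)}(\tilde g)$ from above.

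Here lies the main obstacle: because $\min_{\Delta_{n-1}} \tilde g = 0$, Theorem \ref{theosimplex} cannot be applied to $\tilde g$ directly, as its threshold \eqref{k} blows up when $\varepsilon \to 0$. I would resolve this by a perturbation. Fix $\delta > 0$ and set $\tilde g_\delta := \tilde g + \delta(\sum_i x_i)^d$, which is homogeneous of degree $d$ with $\min_{\Delta_{n-1}} \tilde g_\delta = \delta > 0$ and $B(\tilde g_\delta) \le S + \delta$. Choosing the largest admissible power $k = r - d$, Theorem \ref{theosimplex} ensures that $(\sum_i x_i)^{r-d}\tilde g_\delta$ has nonnegative coefficients as soon as $r$ exceeds the threshold obtained by inserting $\varepsilon = \delta$ and $B(\tilde g_\delta) \le S + \delta$ into \eqref{k}. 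Since this polynomial has degree $r$, the positivity constraints $L^{(r)}(\mathbf{x}^\alpha) \ge 0$ give $L^{(r)}\bigl((\sum_i x_i)^{r-d}\tilde g_\delta\bigr) \ge 0$, while Lemma \ref{lemma} (with $k = r - d$, applied monomialwise to the degree-$d$ form $\tilde g_\delta$) yields $L^{(r)}(\tilde g_\delta) = L^{(r)}\bigl((\sum_i x_i)^{r-d}\tilde g_\delta\bigr) \ge 0$. Subtracting the perturbation and using $0 \le L^{(r)}(1) \le 1$ then gives $L^{(r)}(\tilde g) = L^{(r)}(\tilde g_\delta) - \delta L^{(r)}(1) \ge -\delta$, hence $\mathrm{val} - L^{(r)}(f_0) \le \delta$.

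Finally I would optimize the trade-off in $\delta$: a smaller $\delta$ shrinks the direct error $-\delta$ but forces a larger power $k$, hence a larger $r$. Translating the condition ``$k = r - d$ exceeds the threshold \eqref{k}'' into a lower bound on the admissible $\delta$ (using $B(\tilde g_\delta) \le S + \delta$, and being careful that $k$ must be an integer strictly above the threshold) yields $\delta \ge S\,d(d-1)/\bigl(2(r-1) - d(d-1)\bigr)$; letting $\delta$ decrease to this value and passing to the minimizing $L^{(r)}$ produces \eqref{thmequ}. The one delicate point in getting the stated constant exactly right is this last piece of bookkeeping between the strictness/integrality in \eqref{k} and the perturbation size; the homogenization and the subadditivity estimate for $B$ are routine.
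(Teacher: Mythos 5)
Your proposal is correct and follows essentially the same route as the paper's proof: the same dual-certificate decomposition with strong duality and $L^{(r)}(1)\le 1$, the same homogenization, the same $\varepsilon$-perturbation (your $\delta$) to make P\'olya's quantitative Positivstellensatz (Theorem \ref{theosimplex}) applicable, the same use of Lemmas \ref{lemma} and \ref{lemma2} and of subadditivity of $B(\cdot)$, and the same final trade-off between the perturbation size and the level $r$. The only differences are cosmetic (you justify the homogenization via the ideal $\mathcal{I}$ where the paper says ``wlog'', and you argue for an arbitrary feasible $L^{(r)}$ before specializing).
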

	
	\begin{proof}
		By Theorem \ref{cor1}  there is no duality gap.
		Let $r > d(d-1)/2+1$ and let $L^{(r)}$ be an optimal solution to (\ref{relax}). Fix some $\varepsilon>0$. Then,
		
		\begin{align*}
		0 \le \text{val}-\underline{f}_{\text{LP}}^{(r)}  &= \text{val} - L^{(r)}\left( \sum_{i = 1}^m \bar{y}_if_i-t + f_0 - \sum_{i=1}^m \bar{y}_if_i +t \right) \\
		&= \text{val} - \sum_{i=1}^{m}\bar{y}_i L^{(r)}(f_i)+tL^{(r)}(1) -L^{(r)}\left( f_0 - \sum_{i=1}^m \bar{y}_if_i +t \right) \\
		& \le  \text{val} - \sum_{i=1}^{m}\bar{y}_i b_i+t  -L^{(r)} \left( f_0 - \sum_{i=1}^m \bar{y}_if_i +t\right)  \\
		&= -L^{(r)}\left( f_0 - \sum_{i=1}^m \bar{y}_if_i +t  \right) \\
		& =  - L^{(r)}\left( f_0 - \sum_{i=1}^m \bar{y}_if_i +t+\varepsilon \right) +\varepsilon L^{(r)}(1) \\
		& \le - L^{(r)}\left( f_0 - \sum_{i=1}^m \bar{y}_if_i +t+\varepsilon \right) +\varepsilon,
		\end{align*}
		
		where both inequalities follow from the fact that $L^{(r)}(1)\le 1$. By Lemma \ref{lemma2} we have
		$ \min_{\textbf{x} \in \Delta_{n-1}} f_0(\textbf{x}) - \sum_{i = 1}^{m+1} \bar{y}_if_i(\textbf{x})  + \varepsilon = \varepsilon$.
		We assume wlog that $f_0 - \sum_{i=1}^{m+1} \bar{y}_if_i$ is homogeneous of degree $d$. Define
		\[
		f := f_0 - \sum_{i=1}^{m+1} \bar{y}_if_i + \varepsilon \left(\sum_{i=1}^{n} x_i \right)^d,
		\]
		which is homogeneous as well and its minimum over the simplex is $\varepsilon$.
		Hence, by Theorem \ref{theosimplex} for $k$ as in (\ref{k}) we have
		\[
		f(\textbf{x}) \left(\sum_{i=1}^{n} x_i \right)^k = \sum_{\beta \in \mathbb{N}^n_{d+k}}c_{\beta}x^{\beta}
		\]
		with $c_{\beta}> 0$ for all $\beta \in \mathbb{N}^n_{d+k}$. To determine the smallest integer $k$ for which the theorem holds we will bound $B(f)$. For this, set $y_0 = 1$ and $y_i = -\bar{y}_i$. We may rewrite $f$ as
		\begin{align*}
		f &= \sum_{i = 0}^{m+1} y_i f_i +\varepsilon  \left(\sum_{i=1}^{n} x_i \right)^d \\
		&= \sum_{i=0}^{m+1} y_i f_i +\varepsilon  \left( \sum_{\vert \alpha \vert = d}\binom{d}{\alpha_1 \dots \alpha_n } x^\alpha \right) \\
		& = \sum_{\vert \alpha \vert = d} \left( \sum_{i=0}^{m+1} y_i f_{i,\alpha} +\varepsilon \binom{d}{\alpha_1 \dots \alpha_n }\right)  x^\alpha.
		\end{align*}
		Then,
		\begin{align*}
		B(f) & = \max_{ \alpha }\left[ \left ( \sum_{i=0}^{m+1} y_i f_{i,\alpha} + \frac{d!}{\alpha_1 ! \dots \alpha_n !} \varepsilon    \right)\frac{\alpha_1 ! \dots \alpha_n !}{d!} \right] \\
		& = \left( \max_{ \alpha } \left( \sum_{i=0}^{m+1} y_i f_{i,\alpha} \right) \frac{\alpha_1 ! \dots \alpha_n !}{d!} \right) +\varepsilon   \\
		& \le \sum_{i=0}^{m+1}  \left( \max_{ \alpha } y_if_{i,\alpha} \frac{\alpha_1 ! \dots \alpha_n !}{d!} \right) +\varepsilon  \\
		& = \sum_{i=0}^{m+1}  B(y_if_i)+\varepsilon  .
		\end{align*}
		
		If $r$ is large enough, i.e.,
		\[ r \ge  \left  \lceil \frac{d(d-1)}{2}\frac{\sum_{i=0}^{m+1}  B(y_if_i)+\varepsilon  }{\varepsilon} \right \rceil   \ge  \left  \lceil \frac{d(d-1)}{2}\frac{B(f)}{\varepsilon} \right \rceil,  \]
		it follows from Lemma \ref{lemma} that
		\begin{align*}
		- L^{(r)}\left(f_0 - \sum_{i=1}^{m+1} \bar{y}_if_i + \varepsilon  \right) + \varepsilon & =  \varepsilon  - L^{(r)}(f)  \\
		& =  \varepsilon  - L^{(r)}\left(f \left( \sum_{i=1}^{n} x_i \right)^k \right) \\
		& = \varepsilon  - L^{(r)} \left (\sum_{\beta \in \mathbb{N}_{k+d}^n} c_{\beta} x^{\beta} \right )  \le \varepsilon,
		\end{align*}
		where the last inequality follows from the fact that $L^{(r)}(\textbf{x}^\alpha) \ge 0$ for all $\vert \alpha \vert \le r$.
		One may bound $r$ as follows
		\begin{align*}
		r-1 & \le \frac{d(d-1)}{2}\left( \frac{\sum_{i =0}^{m+1} B(y_if_i)}{\varepsilon} + 1 \right) \\
		\Leftrightarrow \varepsilon & \le \frac{ \sum_{i=0}^{m+1} B(y_if_i) d(d-1)}{2(r-1)-d(d-1)},
		\end{align*}
		concluding the proof. 
	\end{proof}

	\section{Lasserre hierarchy over the sphere}
	\label{sec:Lasserre on sphere}
	
	We now consider the GMP \eqref{primal} over the sphere, i.e.\ we consider the case $K= \mathcal{S}^{n-1}$.
	Additionally, we assume the $f_0, f_1, \dots, f_m$ in \eqref{primal} are homogeneous polynomials of even degree $2d$.
	
	The Lasserre hierarchy \cite{lasserre3} of semidefinite relaxations of the GMP \eqref{primal} over the sphere  is given by
	
	\begin{equation}\label{SDPrelax2}
	\begin{aligned}
	\underline{f}_{\text{SDP}}^{(2r)} = \min & \; L^{(2r)}(f_0)  \\ 
	\text{s.t.} \quad& L^{(2r)}(f_i ) = b_i \quad \forall i \in [m] \\
	& L^{(2r)}(1) \le 1 \\
	&L^{(2r)}\left( [\textbf{x}]_{r} [\textbf{x}]_{r}^T\right)  \succeq 0 \quad \\
	& L^{(2r)}(\textbf{x}^\alpha)  = L^{(2r)} \left ( \textbf{x}^{\alpha} \norm{\textbf{x}}_2^2 \right ) \quad \forall \quad \vert \alpha \vert \le 2r-2,
	\end{aligned}
	\end{equation}
	where the $L^{(2r)}$ operator is now applied entry-wise to matrix-valued functions, where needed.
	
	The following lemma enables us to use  a quantitative Positivstellensatz by Fang an Fawzi \cite{fang} for positive polynomials on the sphere, to obtain
	a rate of convergence of the Lasserre hierarchy.
	
	\begin{lemma}\label{lemma}
		Let $L : \mathbb{R}[\normalfont{\textbf{x}}]_{2k} \rightarrow \mathbb{R}$ be a linear operator and suppose $L\left([\normalfont{\textbf{x}}]_k [\normalfont{\textbf{x}}]_k^T \right) \succeq 0$, where the operator is applied entrywise to the matrix $[\normalfont{\textbf{x}}]_k [\normalfont{\textbf{x}}]_k^T$. Then, $L(\sigma) \ge 0$ for all $\sigma \in \Sigma[\normalfont{\textbf{x}}]_k$.
	\end{lemma}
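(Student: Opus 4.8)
The plan is to exploit the defining property of sums of squares together with the linearity of $L$, thereby reducing the claim to the matrix inequality $L([\textbf{x}]_k[\textbf{x}]_k^T) \succeq 0$ that is assumed. First I would take an arbitrary $\sigma \in \Sigma[\textbf{x}]_k$ and write it as $\sigma = \sum_{j=1}^N h_j^2$ with each $h_j \in \mathbb{R}[\textbf{x}]$ of degree at most $k$; this is possible by the definition of a sum of squares, and it guarantees $\sigma \in \mathbb{R}[\textbf{x}]_{2k}$, so that $L(\sigma)$ lies in the domain of $L$ and is well defined.

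Next I would expand each square in the monomial basis $[\textbf{x}]_k$. Writing $h_j = \textbf{v}_j^T [\textbf{x}]_k$ for the coefficient vector $\textbf{v}_j \in \mathbb{R}^{\binom{n+k}{k}}$ of $h_j$, we obtain
\[ h_j^2 = \left(\textbf{v}_j^T [\textbf{x}]_k\right)\left([\textbf{x}]_k^T \textbf{v}_j\right) = \textbf{v}_j^T\, [\textbf{x}]_k [\textbf{x}]_k^T\, \textbf{v}_j, \]
so that each squared term is a constant quadratic form in the entries of the matrix-valued object $[\textbf{x}]_k[\textbf{x}]_k^T$. The key step is then to apply $L$ and pull the constant vectors $\textbf{v}_j$ outside the operator: since $L$ acts entrywise on $[\textbf{x}]_k[\textbf{x}]_k^T$, all of whose entries are monomials of degree at most $2k$, and since $L$ is linear, we get $L(h_j^2) = \textbf{v}_j^T\, L\!\left([\textbf{x}]_k[\textbf{x}]_k^T\right)\, \textbf{v}_j$. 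Summing over $j$ and invoking linearity once more yields
\[ L(\sigma) = \sum_{j=1}^N \textbf{v}_j^T\, L\!\left([\textbf{x}]_k[\textbf{x}]_k^T\right)\, \textbf{v}_j \ge 0, \]
where the final inequality is exactly the hypothesis $L([\textbf{x}]_k[\textbf{x}]_k^T) \succeq 0$ evaluated on each $\textbf{v}_j$.

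The only point requiring care — and the main, rather mild, obstacle — is the bookkeeping of degrees together with the justification of interchanging $L$ with multiplication by the constant vectors. One must check that every polynomial fed to $L$ lies in its domain $\mathbb{R}[\textbf{x}]_{2k}$, which holds because $\deg h_j^2 \le 2k$ and the entries of $[\textbf{x}]_k[\textbf{x}]_k^T$ have degree at most $2k$; and that the identity $L(\textbf{v}_j^T M \textbf{v}_j) = \textbf{v}_j^T L(M)\textbf{v}_j$ for the matrix $M = [\textbf{x}]_k[\textbf{x}]_k^T$ is nothing more than linearity of $L$ applied to a fixed finite linear combination of the entries of $M$ with scalar coefficients drawn from $\textbf{v}_j$. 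No Positivstellensatz is needed at this stage; the lemma is the elementary bridge that will later let a quantitative sos-based Positivstellensatz on the sphere (Fang and Fawzi \cite{fang}) be transferred to the pseudo-moment functional $L^{(2r)}$ feasible for \eqref{SDPrelax2}.
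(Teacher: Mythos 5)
Your proof is correct and follows essentially the same route as the paper: the paper writes $\sigma = [\textbf{x}]_k^T A [\textbf{x}]_k$ with a psd Gram matrix $A$ and concludes $L(\sigma) = \langle A, L([\textbf{x}]_k[\textbf{x}]_k^T)\rangle \ge 0$, which is exactly your argument with $A = \sum_j \textbf{v}_j \textbf{v}_j^T$ summed into a single trace inner product. No gaps; your degree bookkeeping and the justification for pulling the constant vectors through $L$ are exactly the right points to check.
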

	
	\begin{proof}
		Let $\sigma \in \Sigma[\textbf{x}]_k$ be a sum of squares of degree $2k$. Then there exists $A \succeq 0$ such that $\sigma = [\textbf{x}]_k^TA[\textbf{x}]_k$. Let $\langle \cdot, \cdot \rangle$ denote the trace inner product. We have
		\begin{align*}
		L(\sigma) & = L\left( [\textbf{x}]_k^T A [\textbf{x}]_k  \right) \\
		&= L \left( \langle A, [\textbf{x}]_k [\textbf{x}]_k^T\rangle \right) \\
		& = L \left( \sum_{i,j} A_{i,j} ([\textbf{x}]_k)_i( [\textbf{x}]_k)_j \right) \\
		& = \sum_{i,j} A_{i,j} L\left(([\textbf{x}]_k)_i( [\textbf{x}]_k)_j \right) \\
		&= \langle A, L\left([\textbf{x}]_k [\textbf{x}]_k^T \right) \rangle \ge 0,
		\end{align*}
		since both $A$ and $L\left([\textbf{x}]_k [\textbf{x}]_k^T \right)$ are psd.
	\end{proof}
	
	The quantitative Positivstellensatz by Fang and Fawzi \cite{fang} is as follows.
	
	\begin{thm}\cite[Theorem 3.8]{fang}\label{fawzi}
		Assume $f$ is a homogeneous polynomial of degree $2d$ such that $0 \le f(\normalfont{\textbf{x}}) \le 1$ for all $\normalfont{\textbf{x}} \in \mathcal{S}^{n-1}$ and $d \le n$. There are constants $C_d, C_d^\prime$ that depend only on $d$ such that if $r \ge C_d n$ then
		\[ f + C_d^\prime(d/r)^2= \sigma(\normalfont{\textbf{x}}) + (1-\norm{\normalfont{\textbf{x}}}_2^2)h(\normalfont{\textbf{x}}) \]
		for $\sigma(\normalfont{\textbf{x}}) \in \Sigma[\normalfont{\textbf{x}}]_r$  and $h \in \mathbb{R}[\normalfont{\textbf{x}}]_{2r-2}$.
	\end{thm}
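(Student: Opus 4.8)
The plan is to recast the desired representation as a statement about the value of the degree-$r$ Lasserre relaxation of the form-minimization problem $\min_{\mathbf{x} \in \mathcal{S}^{n-1}} f(\mathbf{x})$ and then to prove that value is close to the true minimum. Because $1 - \norm{\mathbf{x}}_2^2$ generates an Archimedean quadratic module on $\mathcal{S}^{n-1}$ and the relaxation admits a strictly feasible pseudo-moment matrix (e.g.\ the one coming from the uniform measure on the sphere), SDP strong duality holds. Homogenizing constants by powers of $\norm{\mathbf{x}}_2^2$, which equals $1$ on the sphere, the existence of $\sigma \in \Sigma[\mathbf{x}]_r$ and $h \in \mathbb{R}[\mathbf{x}]_{2r-2}$ with $f + C_d^\prime (d/r)^2 = \sigma + (1 - \norm{\mathbf{x}}_2^2) h$ is equivalent to the estimate
\[ f_{\min} - \ell_r \le C_d^\prime (d/r)^2, \qquad f_{\min} := \min_{\mathbf{x} \in \mathcal{S}^{n-1}} f(\mathbf{x}) \ge 0, \]
where $\ell_r = \min_L L(f)$ ranges over all linear $L : \mathbb{R}[\mathbf{x}]_{2r} \to \mathbb{R}$ with $L(1) = 1$, $L([\mathbf{x}]_r [\mathbf{x}]_r^T) \succeq 0$, and $L((\norm{\mathbf{x}}_2^2 - 1) p) = 0$ for all $\deg p \le 2r - 2$. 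Thus it suffices to prove that every such sphere pseudo-expectation $L$ satisfies $L(f) \ge f_{\min} - C_d^\prime (d/r)^2$.

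The second step is to turn a pseudo-expectation into a genuine spherical average at the cost of a controlled error. I would identify the degree-$2d$ form $f$ with a symmetric operator $M_f$ acting on the symmetric power $\mathrm{Sym}^d(\mathbb{R}^n)$ through $f(\mathbf{x}) = \langle \mathbf{x}^{\otimes d}, M_f\, \mathbf{x}^{\otimes d} \rangle$, so that $f_{\min}$ is the smallest \emph{product-state} value of $M_f$ over unit vectors. Restricting $L$ to its top homogeneous part and averaging over the orthogonal group produces a symmetric state $\rho$ on $r$ tensor copies of $\mathbb{R}^n$ whose $d$-body marginal represents $L(f)$. A quantitative de Finetti theorem then approximates this marginal by a mixture $\int (\mathbf{u}^{\otimes d})(\mathbf{u}^{\otimes d})^T \, \mathrm{d}\nu(\mathbf{u})$ of genuine product states, that is, by the degree-$2d$ moments of an honest probability measure $\nu$ on $\mathcal{S}^{n-1}$. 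Since every product state evaluates $M_f$ to a value at least $f_{\min}$, this forces $L(f) \ge f_{\min} - \delta_r$, where $\delta_r$ is the de Finetti approximation error; the hypotheses $d \le n$ and $r \ge C_d n$ are exactly what is needed to enter the regime where this approximation is effective.

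The hard part will be obtaining the \emph{quadratic} rate $\delta_r = O((d/r)^2)$ rather than the linear rate the direct approach gives. The standard de Finetti bound controls the full $d$-body marginal in trace distance only to order $d^2/r$, which would yield an $O(1/r)$ gap. To reach $(d/r)^2$ I would exploit that only one fixed bounded observable $0 \preceq M_f \preceq I$ must be controlled, not the whole marginal, so that the first-order term in the de Finetti expansion can be arranged to cancel, leaving a second-order remainder. Dually, one can instead construct the certificate directly by convolving $f$ against a nonnegative degree-$r$ reproducing kernel on the sphere whose angular second moment is $O(1/r^2)$: the smoothing error is then governed by the second-order smoothness of $f$, which is $O(d^2)$ for a form of degree $2d$, producing precisely the term $C_d^\prime (d/r)^2$, while the nonnegativity and bounded degree of the kernel furnish $\sigma$ and $h$. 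Either way, the improvement over the classical Reznick bound — which only certifies $\norm{\mathbf{x}}_2^{2N} f \in \Sigma[\mathbf{x}]$ for $N = O(1/f_{\min})$ and hence gives an additive term of order $1/r$ — amounts to improving the exponent of $1/r$ from $1$ to $2$, and engineering the vanishing of the linear-order error is the crux of the argument.
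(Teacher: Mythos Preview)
The paper does not contain its own proof of this statement: Theorem~\ref{fawzi} is simply quoted from \cite[Theorem 3.8]{fang} and then applied as a black box in the convergence analysis. There is therefore nothing in the paper to compare your proposal against.

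That said, your sketch is broadly aligned with how Fang and Fawzi actually argue. The reduction in your first paragraph --- via strong duality, homogenization by $\norm{\mathbf{x}}_2^2$, and the equivalence between the SOS certificate and the bound $f_{\min}-\ell_r \le C_d'(d/r)^2$ --- is correct and standard. Of the two routes you describe for the second step, the kernel-convolution approach is the one Fang and Fawzi take: they build an explicit nonnegative reproducing kernel on $\mathcal{S}^{n-1}$ of bounded degree whose localization (angular second moment) is $O(1/r^2)$, and show that convolving $f$ against it yields an SOS polynomial close to $f$. Your de Finetti alternative is a genuinely different strategy; it is known to give the linear rate $O(d^2/r)$, and you are right that squeezing $O((d/r)^2)$ out of it would require cancelling the first-order term, which is not straightforward and is not how the cited result is obtained.

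The main gap in your proposal is that the ``hard part'' is left as a heuristic. You correctly identify that the crux is engineering the vanishing of the linear-order error, but you do not actually carry this out: for the kernel route you would need to exhibit a kernel with the claimed moment bound and verify it is SOS of degree $2r$, and for the de Finetti route you would need a concrete mechanism for the first-order cancellation. As a proof \emph{plan} this is reasonable; as a proof it is incomplete precisely at the step that distinguishes $O(1/r^2)$ from $O(1/r)$.
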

	
	We may now use the theorem by Fang and Fawzi \cite{fang} and Lemma \ref{lemma} 
	to derive a rate of convergence for Lasserre hierarchy \cite{lasserre3} of the GMP on the sphere as follows.
	
	\begin{thm}
		Let $\mathrm{val}$ be the optimal value of the GMP (\ref{primal}) for input data $K = \mathcal{S}^{n-1}, f_0, f_1, \dots, f_m \in \mathbb{R}[\normalfont{\textbf{x}}]$ homogeneous of even degree $2d$, $b_1, \dots, b_m \in \mathbb{R}$ and $d \le n$. Let $(\bar{y},t)$ be a dual optimal solution and let $f_{m+1}(\normalfont{\textbf{x}}):=1$ for every $\normalfont{\textbf{x}} \in \mathcal{S}^{n-1}$, set $\bar{y}_{m+1}=-t$ and set $y_0 = 1$ and $y = -\bar{y}$. Further, let $f^{i,y_i}_{\max} = \max_{\normalfont{\textbf{x}}\in \mathcal{S}^{n-1}} y_if_i(\normalfont{\textbf{x}})$. There exist constants $C_d, C_d^\prime$, only dependent on $d$, such that if $r \ge C_d n$ we have
		\[
		0 \le \mathrm{val}-\underline{f}_{\mathrm{SDP}}^{(2r)} \le  \frac{C_d^\prime d^2 \sum_{i=0}^{m+1} f^{i,y_i}_{\max}}{r^2}.	\]
	\end{thm}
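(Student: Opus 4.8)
The plan is to mirror the simplex argument of Theorem \ref{LPtheorem}, substituting the Fang--Fawzi Positivstellensatz (Theorem \ref{fawzi}) for P\'olya's and Lemma \ref{lemma} for the nonnegativity-of-moments step. The lower bound $0 \le \mathrm{val} - \underline{f}_{\mathrm{SDP}}^{(2r)}$ is immediate, since any measure feasible for \eqref{primal} induces a feasible point of \eqref{SDPrelax2}. For the upper bound, let $L^{(2r)}$ be optimal for \eqref{SDPrelax2} and set $g := f_0 - \sum_{i=1}^m \bar{y}_i f_i + t = \sum_{i=0}^{m+1} y_i f_i$. Using the equality constraints $L^{(2r)}(f_i)=b_i$, the bound $L^{(2r)}(1)\le 1$, the sign condition $t\ge 0$, and strong duality $\mathrm{val}=\sum_{i=1}^m \bar{y}_i b_i - t$ (Theorem \ref{cor1}), the same chain of (in)equalities as in Theorem \ref{LPtheorem} gives $\mathrm{val} - \underline{f}_{\mathrm{SDP}}^{(2r)} \le -L^{(2r)}(g)$. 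It therefore suffices to lower bound $L^{(2r)}(g)$.

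The next step is to certify nonnegativity of $g$ and set up Fang--Fawzi. By Lemma \ref{lemma2}, dual optimality of $(\bar{y},t)$ yields $\min_{\textbf{x}\in\mathcal{S}^{n-1}} g(\textbf{x}) = 0$, so $g\ge 0$ on the sphere. Since $f_{m+1}=1$ may be identified with $\norm{\textbf{x}}_2^{2d}$ on $\mathcal{S}^{n-1}$, the function $g$ agrees there with a form of degree $2d$, and I would work with this homogeneous representative (its values, hence its $\min$ and $\max$, are unchanged on the sphere). Writing $g_{\max} := \max_{\mathcal{S}^{n-1}} g$, subadditivity of the maximum gives $g_{\max} \le \sum_{i=0}^{m+1} f_{\max}^{i,y_i}$, and $\hat{g} := g/g_{\max}$ satisfies $0 \le \hat{g} \le 1$ on $\mathcal{S}^{n-1}$. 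Theorem \ref{fawzi} then provides constants $C_d, C_d'$ such that, for $r \ge C_d n$,
\[ \hat{g} + C_d'(d/r)^2 = \sigma + (1-\norm{\textbf{x}}_2^2)h, \qquad \sigma \in \Sigma[\textbf{x}]_r, \; h \in \mathbb{R}[\textbf{x}]_{2r-2}. \]
Multiplying through by $g_{\max}>0$ keeps $g_{\max}\sigma$ a sum of squares in $\Sigma[\textbf{x}]_r$ and $g_{\max}h$ of degree at most $2r-2$.

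Finally I would apply $L^{(2r)}$ to the scaled identity $g + g_{\max}C_d'(d/r)^2 = g_{\max}\sigma + (1-\norm{\textbf{x}}_2^2)\,g_{\max}h$. The term $L^{(2r)}(g_{\max}\sigma)$ is nonnegative by Lemma \ref{lemma}, because the moment matrix $L^{(2r)}([\textbf{x}]_r[\textbf{x}]_r^T)$ is psd; the term $L^{(2r)}\!\left((1-\norm{\textbf{x}}_2^2)\,g_{\max}h\right)$ vanishes, since the ideal constraint $L^{(2r)}(\textbf{x}^\alpha)=L^{(2r)}(\textbf{x}^\alpha \norm{\textbf{x}}_2^2)$ for $\abs{\alpha}\le 2r-2$ forces $L^{(2r)}((1-\norm{\textbf{x}}_2^2)q)=0$ for every $q\in\mathbb{R}[\textbf{x}]_{2r-2}$. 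Using $0 \le L^{(2r)}(1)\le 1$ this yields $L^{(2r)}(g) \ge -g_{\max}C_d'(d/r)^2$, so that $-L^{(2r)}(g) \le g_{\max}C_d'(d/r)^2 \le C_d' d^2\left(\sum_{i=0}^{m+1} f_{\max}^{i,y_i}\right)\!/r^2$, which is exactly the claimed estimate.

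The individual steps are routine; the thing to get right is the degree bookkeeping, so that every argument of $L^{(2r)}$ lies in $\mathbb{R}[\textbf{x}]_{2r}$ and both structural constraints of \eqref{SDPrelax2} are usable: $\sigma$ must have degree at most $2r$ (to match the moment matrix in Lemma \ref{lemma}) and $h$ at most $2r-2$ (to match the ideal constraint), while $g$ has degree $2d \le 2r$. Compared with the simplex argument this is pleasantly cleaner in one respect: no $\varepsilon$-perturbation is needed, because Theorem \ref{fawzi} already applies to $g$ with $\min_{\mathcal{S}^{n-1}} g = 0$, so strict positivity is not required. The only genuine subtlety is the homogenization of the constant $t$ via $\norm{\textbf{x}}_2^{2d}$, together with the identity $L^{(2r)}(1)=L^{(2r)}(\norm{\textbf{x}}_2^{2d})$ obtained by iterating the ideal constraint, which guarantees that passing to the homogeneous representative of $g$ leaves $L^{(2r)}(g)$ unchanged.
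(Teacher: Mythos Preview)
Your proposal is correct and follows essentially the same approach as the paper's proof: reduce to bounding $-L^{(2r)}(g)$ via strong duality and the constraints of \eqref{SDPrelax2}, normalize $g$ by its maximum on $\mathcal{S}^{n-1}$, invoke Theorem~\ref{fawzi}, and then use Lemma~\ref{lemma} together with the ideal constraint to conclude. The only cosmetic differences are the order in which the $\delta$-shift and the application of $L^{(2r)}$ are performed, and that you make the homogenization of the constant term via $\norm{\textbf{x}}_2^{2d}$ explicit where the paper leaves it implicit.
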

	\begin{proof}
		
			As in the proof of Theorem \ref{LPtheorem}, Theorem \ref{cor1} gives us strong duality. Let $r \ge C_dn$ and let $L^{(2r)}$ be an optimal solution to (\ref{SDPrelax2}). Then by the same reasoning as in Theorem \ref{LPtheorem},
			
			\[ 0 \le \text{val}-\underline{f}_{\text{SDP}}^{(2r)} \le -L^{(2r)}\left( f_0 - \sum_{i=1}^{m+1} \bar{y}_if_i  \right). \]
			
			Set $f := f_0 - \sum_{i = 1}^{m+1} y_if_i $ and $f_{\max} = \max_{\textbf{x}\in \mathcal{S}^{n-1}} f(\textbf{x})$. Then $\tilde{f} = f/f_{\max}$ satisfies $0 \le \tilde{f} \le 1$ by Lemma \ref{lemma2}. We find for any $\delta \ge 0$
			\begin{align*}
			-L^{(2r)}\left( f_0 - \sum_{i=1}^{m+1} \bar{y}_if_i \right) & = -f_{\max} L^{(2r)}\left( \tilde{f} \right) \\
			& \le -f_{\max} L^{(2r)}\left( \tilde{f} +\delta \right) + \delta f_{\max}.
			\end{align*}
			
			Choosing $\delta = \frac{C_d^\prime d^2}{r^2}$ and applying Theorem \ref{fawzi} we see that $\tilde{f}+ \delta = \sigma + (1-\norm{\textbf{x}}_2^2)h$ for $\sigma \in \Sigma[\textbf{x}]_r$ and $h \in \mathbb{R}[\textbf{x}]_{2r-2}$.
			
			Thus, since $L^{(2r)}(\textbf{x}^\alpha) = L^{(2r)}(\textbf{x}^\alpha \norm{\textbf{x}}_2^2)$ we have
			\begin{align*}
			-f_{\max} L^{(2r)}\left( \tilde{f} +\frac{C_d^\prime d^2}{r^2} \right) + \frac{C_d^\prime d^2}{r^2}f_{\max}&= -f_{\max} L^{(2r)}\left( \sigma + (1-\norm{\textbf{x}}_2^2)h \right) + \frac{C_d^\prime d^2}{r^2} f_{\max}\\
			& =  -f_{\max} L^{(2r)}\left( \sigma \right)  + \frac{C_d^\prime d^2}{r^2} f_{\max}\\
			& \le C_d^\prime\frac{d^2}{r^2}f_{\max},
			\end{align*}
			where the last inequality follows from Lemma \ref{lemma}.
			Noting that
			\[ f_{\max} = \max_{\textbf{x}\in \mathcal{S}^{n-1}} \left( f_0(\textbf{x}) - \sum_{i = 1}^{m+1} \bar{y}_i f_i(\textbf{x})\right) \le \sum_{i=0}^{m+1} \max_{\textbf{x}\in \mathcal{S}^{n-1}} y_i f_i(\textbf{x}) = \sum_{i=0}^{m+1} f_{\max}^{i,y_i} \]
			we arrive at the result.

	\end{proof}
	
	\section{Limiting behavior of the hierarchies of linear operators}
	\label{sec:limiting behavior}
	Consider the case when $K = \Delta_{n-1}$. When looking at the linear operators in the relaxation hierarchies (\ref{relax})  one would expect that in the limit, i.e. for $r \rightarrow \infty$, the operators $L^{(r)}(\cdot)$ behave like $\langle \cdot, \mu \rangle$ for some positive measure $\mu$. In the rest of this section we prove that this is in fact the case and we will define the limit in a meaningful way.
	Consider again the ideal $\mathcal{I} = \{ \textbf{x} \mapsto p(\textbf{x})\left( 1- \sum_{i=1}^{n}x_i \right) : p \in \mathbb{R[\textbf{x}]} \}$ and let $L : \mathbb{R[\textbf{x}]} / \mathcal{I} \rightarrow \mathbb{R}$ be a linear operator such that
	\begin{enumerate}
		\item $L(\textbf{x}^\alpha) \ge 0$ for all $\alpha \in \mathbb{N}^n$
		\item $L(1)\le1$
	\end{enumerate}
	and let
	\[ \mathcal{L} = \{ L : \mathbb{R}[\textbf{x}]/\mathcal{I} \rightarrow \mathbb{R} : L \text{ fulfills conditions } 1. \text{ and } 2. \} \]
	be the class of all linear operators that satisfy the conditions above. Note that for every $L \in \mathcal{L}$ the relation
	\[ L\left(\left(1-\sum_{i=1}^{n}x_i\right)\textbf{x}^\alpha \right) = 0  \text{ for all } \alpha \in \mathbb{N}^n \]
	trivially holds.
	If $\norm{f} = \sup_{\textbf{x} \in \Delta_{n-1}} \vert f(\textbf{x}) \vert$, then $(\mathbb{R[\textbf{x}]}/\mathcal{I}, \norm{\cdot})$ is a normed vector space.
	\begin{thm}(see, e.g. \cite[Theorem 1.4.2]{megg})\label{continuity}
		Suppose $F: X \rightarrow Y$ is a linear operator between two normed vector spaces $(X, \norm{\cdot}_X)$ and $(Y, \norm{\cdot}_Y)$, then the following are equivalent
		\begin{enumerate}
			\item $F$ is continuous
			\item $\norm{Fx}_Y \le M \norm{x}_X$ for some $M\in \mathbb{R}$.
		\end{enumerate}
	\end{thm}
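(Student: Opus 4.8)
The plan is to establish the equivalence by proving the two implications separately. Both directions rest entirely on the linearity of $F$ together with the absolute homogeneity of the two norms; no completeness or finite-dimensionality is needed, so the argument is purely formal. Throughout I would read condition (2) as the statement that $F$ is \emph{bounded}, so that the theorem is the classical "continuous if and only if bounded" characterization of linear operators, and I would note at the outset that we may take $M \ge 0$, since a negative $M$ would force $\norm{Fx}_Y \le M\norm{x}_X < 0$ for every $x \ne 0$, which is impossible.

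First I would show that (2) implies (1). Assuming $\norm{Fx}_Y \le M\norm{x}_X$ for all $x \in X$, linearity gives $Fx - Fx' = F(x-x')$ for any $x, x' \in X$, hence
\[ \norm{Fx - Fx'}_Y = \norm{F(x-x')}_Y \le M\,\norm{x - x'}_X. \]
Thus $F$ is Lipschitz continuous with constant $M$, and in particular continuous, which is exactly condition (1).

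For the converse, that (1) implies (2), I would exploit continuity at the single point $0 \in X$, using that $F(0) = 0$ by linearity. Applying the definition of continuity at $0$ with $\varepsilon = 1$ produces a $\delta > 0$ such that $\norm{x}_X \le \delta$ forces $\norm{Fx}_Y \le 1$. For an arbitrary nonzero $x$, the rescaled vector $z = \delta x / \norm{x}_X$ satisfies $\norm{z}_X = \delta$, so $\norm{Fz}_Y \le 1$; unfolding the scalar factor through the linearity of $F$ and the homogeneity of $\norm{\cdot}_X$ yields
\[ \norm{Fz}_Y = \frac{\delta}{\norm{x}_X}\,\norm{Fx}_Y \le 1, \]
so that $\norm{Fx}_Y \le \norm{x}_X/\delta$. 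Setting $M = 1/\delta$, and noting the inequality is trivial at $x = 0$, establishes (2) and closes the equivalence.

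I do not expect any genuine obstacle here, as this is the standard characterization of bounded linear maps. The only step requiring care is the rescaling in the second implication: one must simultaneously invoke the linearity of $F$ and the absolute homogeneity of $\norm{\cdot}_X$ in order to promote the purely local bound valid on the ball of radius $\delta$ around $0$ into a global bound holding on all of $X$.
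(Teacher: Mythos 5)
Your proof is correct. The paper does not prove this statement at all --- it is quoted as a known result with a reference to Megginson (Theorem 1.4.2), so there is no in-paper argument to compare against; what you give is the standard ``continuous iff bounded'' argument for linear maps, and it is the right one. The only microscopic point to polish is in the direction $(1)\Rightarrow(2)$: continuity at $0$ gives a $\delta>0$ with $\norm{x}_X<\delta$ implying $\norm{Fx}_Y<1$ (open ball), whereas your rescaled vector $z=\delta x/\norm{x}_X$ sits on the boundary $\norm{z}_X=\delta$; taking $z=\delta x/(2\norm{x}_X)$ instead yields $M=2/\delta$ and removes the issue.
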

	
	Using Theorem \ref{continuity} we can prove that the operators we consider are continuous in the limit.
	
	\begin{lemma}\label{lemmaCont}
		Every $L \in \mathcal{L}$ is continuous.
	\end{lemma}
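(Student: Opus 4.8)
The plan is to invoke Theorem \ref{continuity} with $X = (\mathbb{R}[\textbf{x}]/\mathcal{I}, \norm{\cdot})$ and $Y = \mathbb{R}$ equipped with the absolute value: it suffices to exhibit a constant $M$ with $\abs{L(f)} \le M \norm{f}$ for all $f$. I will show that $M = 1$ works, so that $L$ is in fact $1$-Lipschitz.

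First I would reduce to homogeneous polynomials. Given $f$ of degree $d$, replace it by its homogenization $g = \sum_{j=0}^d f_j \left(\sum_{i=1}^n x_i\right)^{d-j}$, where $f_j$ is the degree-$j$ part of $f$, exactly as in Section \ref{chapterLP}. Since $g - f \in \mathcal{I}$, we have $L(f) = L(g)$, and because every element of $\mathcal{I}$ vanishes on $\Delta_{n-1}$, also $\norm{f} = \norm{g}$. Thus it suffices to bound $\abs{L(g)}$ for homogeneous $g$ of degree $d$.

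The key step is to transfer the sup-norm bound into a statement about coefficients by means of P\'olya's theorem. Fix $\varepsilon > 0$ and consider $p_{\pm} = (\norm{g}+\varepsilon)\left(\sum_{i=1}^n x_i\right)^d \pm g$, which is homogeneous of degree $d$. On the simplex $\left(\sum_i x_i\right)^d = 1$ and $\abs{g} \le \norm{g}$, so $p_{\pm} \ge \varepsilon > 0$ on $\Delta_{n-1}$. By Theorem \ref{theosimplex} there is a $k$ such that $\left(\sum_i x_i\right)^k p_{\pm}$ has only nonnegative coefficients, whence condition $1$ gives $L\!\left(\left(\sum_i x_i\right)^k p_{\pm}\right) \ge 0$. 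Now $\left(\sum_i x_i\right)^k - 1 \in \mathcal{I}$, since it factors through $1-\sum_i x_i$, so $L\!\left(\left(\sum_i x_i\right)^k h\right) = L(h)$ for every $h$; in particular $L\!\left(\left(\sum_i x_i\right)^k p_{\pm}\right) = L(p_{\pm})$ and $L\!\left(\left(\sum_i x_i\right)^d\right) = L(1)$. Combining these,
\[ 0 \le L(p_{\pm}) = (\norm{g}+\varepsilon)L(1) \pm L(g). \]
Since $0 \le L(1) \le 1$ by conditions $1$ and $2$, this yields $\abs{L(g)} \le (\norm{g}+\varepsilon)L(1) \le \norm{g}+\varepsilon$. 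Letting $\varepsilon \downarrow 0$ gives $\abs{L(f)} = \abs{L(g)} \le \norm{g} = \norm{f}$, and Theorem \ref{continuity} completes the argument.

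The main obstacle is the gap between the two notions of positivity in play: condition $1$ only guarantees $L \ge 0$ on polynomials with nonnegative coefficients, whereas the sup-norm bound naturally produces polynomials that are merely nonnegative (indeed positive) on the simplex. P\'olya's Positivstellensatz, in the form of Theorem \ref{theosimplex}, is precisely the tool that bridges this gap, and the perturbation by $\varepsilon$ is needed because that result requires strict positivity on $\Delta_{n-1}$ rather than mere nonnegativity.
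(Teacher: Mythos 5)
Your proof is correct, but it takes a genuinely different route from the paper's. The paper argues via Theorem \ref{LPtheorem}: it considers the optimizers $L^\ast$ and $L^\prime$ of $\min_{L\in\mathcal{L}}L(f)$ and $\max_{L\in\mathcal{L}}L(f)$, asserts $L^\ast(f)=f_{\min}$ and $L^\prime(f)=f_{\max}$ as a consequence of the convergence of the LP hierarchy, and concludes $-\norm{f}\le f_{\min}\le L(f)\le f_{\max}\le \norm{f}$, so $M=1$. You instead apply P\'olya's theorem (Theorem \ref{theosimplex}) directly to the perturbed homogenizations $(\norm{g}+\varepsilon)\left(\sum_{i=1}^n x_i\right)^d\pm g$ and use only the defining conditions of $\mathcal{L}$ together with the fact that $L$ is constant on cosets of $\mathcal{I}$. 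Both arguments ultimately rest on P\'olya's Positivstellensatz and both yield $M=1$, but yours is more self-contained: it needs neither the existence of optimizers over the infinite-dimensional set $\mathcal{L}$ nor the dual-attainment hypothesis built into Theorem \ref{LPtheorem}, and it sidesteps a small imprecision in the paper's claim that $\min_{L\in\mathcal{L}}L(f)=f_{\min}$ (since $\mathcal{L}$ only imposes $L(1)\le 1$, the zero operator is feasible and the minimum is $\min(0,f_{\min})$ when $f_{\min}>0$ --- harmless for the final bound, but your version avoids the issue entirely). The homogenization step, the $\varepsilon$-perturbation needed for strict positivity, and the passage from $\left(\sum_{i=1}^n x_i\right)^k h$ to $h$ modulo $\mathcal{I}$ are all handled correctly.
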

	\begin{proof}
		By Theorem \ref{continuity} it suffices to show that every $L \in \mathcal{L}$ satisfies
		\[ \abs{L(f)} \le M \norm{f} = M \sup_{\textbf{x} \in \Delta_{n-1}} \abs{f} \]
		for all $f \in \mathbb{R[\textbf{x}]}/\mathcal{I}$. Hence, let $f \in \mathbb{R}[\textbf{x}]/\mathcal{I}$ and let $\norm{f} = \sup_{\textbf{x} \in \Delta_{n-1}} \vert f(\textbf{x}) \vert$. Also set
		\[f_{\min} = \min_{x \in \Delta_{n-1}} f(\textbf{x}) \ge -\norm{f} \text{ and } f_{\max} = \max_{x \in \Delta_{n-1}} f(\textbf{x}) \le \norm{f}. \]
		Let $L^\ast$ be the optimizer of
		\[\min L(f) \text{ s.t. } L \in \mathcal{L} \]
		and note that $L^\ast (f) = f_{\min}$ as an immediate consequence of Theorem \ref{LPtheorem}. Hence, for all $L \in \mathcal{L}$ we have
		\[ L(f) \ge L^\ast(f) = f_{\min} \ge -\norm{f}. \]
		Similarly, let $L^\prime$ be the optimizer of
		\[ \max L(f) \text{ s.t. } L \in \mathcal{L}. \]
		By the same reasoning we have $L^\prime(f) = f_{\max}$ and it follows that $L(f) \le \norm{f}$ for all $L \in \mathcal{L}$. Hence one can set $M=1$ and we see
		
	\[\abs{L(f)} \le \norm{f}. \] 
	\end{proof}
	
	The set $\mathbb{R[\textbf{x}]} / \mathcal{I}$ is dense in $\mathcal{C}(\Delta_{n-1})$. This means we can employ the following theorem in the next step.
	\begin{thm}(see, e.g. \cite[Theorem 1.9.1]{megg})\label{BLT}
		Suppose that $M$ is a dense subspace of a normed space $X$, that $Y$ is a Banach space, and that $T_0 : M \rightarrow Y$ is a bounded linear operator. Then there is a unique continuous function $T : X \rightarrow Y$ that agrees with $T_0$ on $M$. This function $T$ is a bounded linear operator and $\norm{T} = \norm{T_0}$.
	\end{thm}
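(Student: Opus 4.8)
The plan is to construct the extension $T$ pointwise by limits, exploiting density of $M$ and completeness of $Y$. First I would fix $x \in X$ and, using density of $M$, pick a sequence $(x_n) \subset M$ with $x_n \to x$. Convergent sequences are Cauchy, and boundedness of $T_0$ gives $\norm{T_0 x_n - T_0 x_m}_Y \le \norm{T_0}\,\norm{x_n - x_m}_X$, so $(T_0 x_n)$ is Cauchy in $Y$. Completeness of $Y$ then supplies a limit, and I would define $Tx := \lim_{n} T_0 x_n$.

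Next I would verify that this definition is independent of the approximating sequence: if $x_n \to x$ and $x_n' \to x$ with both sequences in $M$, then the interleaved sequence $x_1, x_1', x_2, x_2', \dots$ also converges to $x$, so its image under $T_0$ is Cauchy and hence convergent, forcing $\lim T_0 x_n = \lim T_0 x_n'$. Taking the constant sequence $x_n = x$ for $x \in M$ shows that $T$ agrees with $T_0$ on $M$. Linearity of $T$ then follows by passing to the limit in the linearity of $T_0$ along sums and scalar multiples of approximating sequences.

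For the norm identity I would argue in two directions. From $\norm{T_0 x_n}_Y \le \norm{T_0}\,\norm{x_n}_X$ and continuity of the norm, passing to the limit yields $\norm{Tx}_Y \le \norm{T_0}\,\norm{x}_X$, so $T$ is bounded with $\norm{T} \le \norm{T_0}$. Conversely, since $T$ restricts to $T_0$ on $M$, the supremum defining $\norm{T}$ over the unit ball of $X$ dominates the one defining $\norm{T_0}$ over the unit ball of $M \subseteq X$, whence $\norm{T} \ge \norm{T_0}$, and equality follows. Boundedness gives continuity by Theorem \ref{continuity}. Uniqueness is the routine fact that two continuous maps agreeing on a dense subset agree everywhere: if $T'$ is another continuous extension and $x = \lim x_n$ with $x_n \in M$, then $T'x = \lim T' x_n = \lim T_0 x_n = Tx$.

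I expect the well-definedness step to be the main obstacle, since it is the one place where a genuine argument is needed to guarantee that the limit does not depend on the chosen sequence; everything else (linearity, the norm equality, continuity, and uniqueness) is a formal consequence once the extension is known to exist and be single-valued. The essential hypothesis throughout is completeness of $Y$, which is precisely what promotes the Cauchy property of $(T_0 x_n)$ to convergence and thereby makes the construction possible.
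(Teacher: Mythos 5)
Your proof is correct and complete; it is the standard construction of the continuous extension by Cauchy sequences, including the interleaving argument for well-definedness and the two-sided norm estimate. The paper itself offers no proof of this statement --- it is quoted as a classical result with a citation to Megginson's textbook --- and your argument is precisely the standard one given there, so nothing further is needed.
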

	
	Now let
	\[\bar{\mathcal{L}} = \left\{ \bar{L} : \mathcal{C}(\Delta_{n-1}) \rightarrow \mathbb{R} : \bar{L} \text{ is the continuous linear extension  of some } L \in \mathcal{L} \right\}. \]
	
	\begin{prop}
		Let $\bar{L} \in \bar{\mathcal{L}}$ and $f \in \mathcal{C}(\Delta_{n-1})$. Then
		\[ \bar{L}(f) = \int_{\Delta_{n-1}} f(\normalfont{\textbf{x}})\mathrm{d}\mu(\normalfont{\textbf{x}}) \]
		for some positive measure $\mu$ supported on $\Delta_{n-1}$, satisfying $\mu(\Delta_{n-1}) \le 1$.
	\end{prop}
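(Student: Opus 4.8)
The plan is to identify $\bar{L}$ with a positive, bounded linear functional on the space $\mathcal{C}(\Delta_{n-1})$ of continuous functions on the compact simplex, and then to invoke the Riesz--Markov--Kakutani representation theorem: every positive linear functional on $\mathcal{C}(X)$, with $X$ compact, is given by integration against a unique regular positive Borel measure on $X$. Boundedness and linearity of $\bar{L}$ are already secured by Lemma \ref{lemmaCont} together with Theorem \ref{BLT}, so the real content is to prove \emph{positivity}: if $f \in \mathcal{C}(\Delta_{n-1})$ satisfies $f \ge 0$ on $\Delta_{n-1}$, then $\bar{L}(f) \ge 0$.

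I would prove positivity first on the dense subspace $\mathbb{R}[\textbf{x}]/\mathcal{I}$ and then extend by continuity. For a polynomial $f$, the argument in the proof of Lemma \ref{lemmaCont} already shows that every $L \in \mathcal{L}$ obeys $L(f) \ge \min_{\textbf{x} \in \Delta_{n-1}} f(\textbf{x})$, the minimizing functional attaining $f_{\min}$ by Theorem \ref{LPtheorem}. Consequently $f \ge 0$ on $\Delta_{n-1}$ forces $L(f) \ge 0$. To pass to an arbitrary continuous $f \ge 0$, I would use that $\mathbb{R}[\textbf{x}]/\mathcal{I}$ is dense in $\mathcal{C}(\Delta_{n-1})$ to pick polynomials $q_k \to f$ uniformly, and then set $p_k := q_k + \norm{f - q_k}$. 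Each $p_k$ is a polynomial with $p_k \ge f \ge 0$ on the simplex, and $\norm{p_k - f} \le 2\norm{q_k - f} \to 0$. Since $\bar{L}$ is continuous and agrees with $L$ on $\mathbb{R}[\textbf{x}]/\mathcal{I}$, we obtain $\bar{L}(f) = \lim_k L(p_k) \ge 0$, establishing positivity on all of $\mathcal{C}(\Delta_{n-1})$.

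With positivity in hand, the Riesz--Markov--Kakutani theorem yields a positive Borel measure $\mu$ on $\Delta_{n-1}$ with $\bar{L}(f) = \int_{\Delta_{n-1}} f(\textbf{x}) \mathrm{d}\mu(\textbf{x})$ for all $f \in \mathcal{C}(\Delta_{n-1})$. The mass bound follows by evaluating at the constant function $1$: since $\bar{L}$ extends $L$ and $L(1) \le 1$ by the second defining condition of $\mathcal{L}$, we get $\mu(\Delta_{n-1}) = \int_{\Delta_{n-1}} \mathrm{d}\mu = \bar{L}(1) = L(1) \le 1$. I expect the main obstacle to be precisely the positivity step: one must combine the polynomial inequality coming from Theorem \ref{LPtheorem} with a careful nonnegative uniform approximation, since a naive polynomial approximation of $f$ need not itself be nonnegative on the simplex; the upward shift by $\norm{f - q_k}$ is what repairs this while preserving uniform convergence, after which the representation theorem does the remaining work.
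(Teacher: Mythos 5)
Your proof is correct, and it shares the paper's overall skeleton --- reduce everything to showing that $\bar{L}$ is a \emph{positive} functional on $\mathcal{C}(\Delta_{n-1})$, then invoke the Riesz representation of positive functionals and read off the mass bound from $L(1)\le 1$ --- but the positivity step itself is carried out by a genuinely different mechanism. The paper approximates a nonnegative continuous $f$ by its Bernstein polynomials $\mathcal{B}_f^r(\textbf{x})=\sum_{\vert\alpha\vert=r}f(\alpha/r)\binom{r}{\alpha}\textbf{x}^\alpha$; these have nonnegative coefficients, so $L(\mathcal{B}_f^r)\ge 0$ follows directly from the defining condition $L(\textbf{x}^\alpha)\ge 0$, with no appeal to the convergence of the LP hierarchy, and uniform convergence plus continuity of $\bar{L}$ finishes the step. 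You instead take a generic uniform polynomial approximation $q_k\to f$, repair nonnegativity by the upward shift $p_k=q_k+\norm{f-q_k}$, and then rely on the assertion from the proof of Lemma \ref{lemmaCont} (ultimately Theorem \ref{LPtheorem}, i.e., the quantitative P\'olya theorem) that $L$ is nonnegative on polynomials that are nonnegative on the simplex. Both routes are valid; yours is more generic in that it works for any class of functionals already known to be nonnegative on simplex-nonnegative polynomials, while the paper's Bernstein argument is more elementary and self-contained for this particular class $\mathcal{L}$, using only $L(\textbf{x}^\alpha)\ge 0$. One small caveat you inherit from the paper: the inequality $L(f)\ge f_{\min}$ as literally stated fails, e.g., for $L\equiv 0$ when $f_{\min}>0$, because $\mathcal{L}$ only requires $L(1)\le 1$ rather than $L(1)=1$; the correct form is $L(f)\ge f_{\min}\,L(1)$, which still yields $L(p_k)\ge 0$ for your nonnegative $p_k$, so your conclusion is unaffected.
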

	\begin{proof}
		It is sufficient to show $\bar{L}(f) \ge 0$ for all $f \in \mathcal{C}(\Delta_{n-1})_+ = \{f \in \mathcal{C}(\Delta_{n-1}) : f(\textbf{x}) \ge 0 \; \forall \textbf{x} \in \Delta_{n-1} \}$. To see this, note that the space $\mathcal{C}(\Delta_{n-1}$) can be ordered by the convex cone $\mathcal{C}(\Delta_{n-1})_+$. Now $\bar{L}(f) \ge 0$ for all $f \in \mathcal{C}(\Delta_{n-1})_+$ implies that $\bar{L} \in \left( \mathcal{C}(\Delta_{n-1})_+ \right)^\ast$, i.e. the dual cone of $\mathcal{C}(\Delta_{n-1})_+$ which is known to be the set of finite Borel measures on $\Delta_{n-1}$. Let $f$ be a homogeneous continuous function that is non-negative on the simplex and consider its Bernstein approximation of order $r$ given by
		\[ \mathcal{B}_f^r(\textbf{x}) = \sum_{\substack{\alpha \in \mathbb{N}_r^n \\ \vert \alpha \vert = r }} f\left(\frac{\alpha}{r}\right){{r}\choose{\alpha}}. \]
		The approximation converges uniformly to $f$ as $r \rightarrow \infty$ since $f$ is continuous. Using Lemma \ref{lemmaCont} we see
		\begin{align*}
		\bar{L}(f) & = \bar{L}( \lim_{r \rightarrow \infty} \mathcal{B}_f^r ) \\
		& \overset{\bar{L}\text{ cont.}}{=} \lim_{r \rightarrow \infty} \bar{L}(\mathcal{B}_f^r) \\
		& = \lim_{r \rightarrow \infty} \sum\limits_{\substack{\alpha \in \mathbb{N}_r^n \\ \vert \alpha \vert = r }}  \underbrace{ f\left(\frac{\alpha_1}{r}, \dots, \frac{\alpha_n}{r}\right)}_{\ge0} \underbrace{ \binom{r}{\alpha}}_{\ge0} \underbrace{ \bar{L}(\textbf{x}^\alpha)}_{\ge0}  \ge 0.
		\end{align*}
		Hence, it follows that $\bar{L}(f) = \langle f, \mu \rangle$ for some positive measure $\mu$, such that $\mu(\Delta_{n-1}) \le 1$.
	\end{proof}
	
	\begin{rem}
		By the proof given above, it becomes clear that the continuous linear extension can 
		in fact be defined in terms of the limit of the Bernstein approximation, i.e., define $\bar{L}(f) := \lim_{r \rightarrow \infty} L(\mathcal{B}_f^r)$ for $f \in \mathcal{C}(\Delta_{n-1})$ and $L \in \mathcal{{L}}$.
	\end{rem}

	For the sphere case, i.e. $K = \mathcal{S}^{n-1}$ consider the following theorem.
	
	\begin{thm}(see, e.g. \cite[Theorem 3.8]{lasserre3})\label{repmeas}
		Let $\textbf{y} = (y_\alpha)_{\alpha \in \mathbb{N}^n} \subset \mathbb{R}^\infty$ be a given infinite real sequence, $L : \mathbb{R}[\normalfont{\textbf{x}}] \rightarrow \mathbb{R}$ be the linear operator defined by
		\[ p(\normalfont{\textbf{x}}) = \sum_{\alpha \in \mathbb{N}^n} p_\alpha \normalfont{\textbf{x}}^\alpha \mapsto L(p) = \sum_{\alpha \in \mathbb{N}^n} p_\alpha y_\alpha, \]
		and let $K = \{ \normalfont{\textbf{x}} \in \mathbb{R}^n : g_1(\normalfont{\textbf{x}})\ge0, \dots, g_m(\normalfont{\textbf{x}})\ge0 \}$. The sequence $\textbf{y}$ has a finite Borel representing measure with support contained in $K$ if and only if
		\[ L(f^2g_J) \ge0 \; \forall J \subseteq \{1, \dots, m\} \text{ and } f \in \mathbb{R}[\normalfont{\textbf{x}}], \]
		where
		$ g_J(\normalfont{\textbf{x}}) = \prod_{j \in J} g_j (\normalfont{\textbf{x}}). $
	\end{thm}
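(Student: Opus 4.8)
The plan is to establish the two implications separately, after fixing notation. Write $\mathrm{Pos}(K) = \{p \in \mathbb{R}[\textbf{x}] : p(\textbf{x}) \ge 0 \text{ for all } \textbf{x} \in K\}$ for the cone of polynomials nonnegative on $K$, and let $T = \{ \sum_{J \subseteq [m]} \sigma_J g_J : \sigma_J \in \Sigma[\textbf{x}] \}$ be the preordering generated by $g_1, \dots, g_m$, with the convention $g_\emptyset = 1$. With this notation the hypothesis $L(f^2 g_J) \ge 0$ for all $f \in \mathbb{R}[\textbf{x}]$ and all $J \subseteq [m]$ is exactly the statement that $L$ is nonnegative on $T$: any $\sigma_J \in \Sigma[\textbf{x}]$ is a finite sum $\sum_k h_k^2$, so $L(\sigma_J g_J) = \sum_k L(h_k^2 g_J) \ge 0$. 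The necessity direction is then immediate. If $\textbf{y}$ has a representing measure $\mu$ supported on $K$, then $L(p) = \int_K p \, \mathrm{d}\mu$ for every $p$; fixing $f$ and $J$, each $g_j \ge 0$ on $K$ forces $g_J = \prod_{j \in J} g_j \ge 0$ on $K$, while $f^2 \ge 0$ everywhere, so the integrand is nonnegative $\mu$-almost everywhere and $L(f^2 g_J) = \int_K f^2 g_J \, \mathrm{d}\mu \ge 0$.

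For the sufficiency direction I would reduce to Haviland's theorem, which asserts that a linear functional on $\mathbb{R}[\textbf{x}]$ admits a representing measure supported on a closed set $K$ precisely when it is nonnegative on $\mathrm{Pos}(K)$. It therefore suffices to upgrade the hypothesis $L \ge 0$ on $T$ to $L \ge 0$ on $\mathrm{Pos}(K)$. This is the role of a Positivstellensatz: for compact $K$ — exactly the regime of the simplex and the sphere treated in this paper — Schmüdgen's theorem guarantees that every $p$ with $p > 0$ on $K$ belongs to $T$. Granting this, fix $p \in \mathrm{Pos}(K)$ and $\varepsilon > 0$; then $p + \varepsilon > 0$ on $K$, so $p + \varepsilon \in T$, and nonnegativity of $L$ on $T$ gives $L(p) + \varepsilon L(1) = L(p + \varepsilon) \ge 0$. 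Since $L(1) = y_0$ is a fixed real number, letting $\varepsilon \downarrow 0$ yields $L(p) \ge 0$. Haviland's theorem then produces the desired finite Borel representing measure supported on $K$.

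I expect the main obstacle to be precisely this promotion step. The implication from nonnegativity on the preordering to nonnegativity on all of $\mathrm{Pos}(K)$ genuinely fails for general closed $K$ and relies on compactness through Schmüdgen's Positivstellensatz; the necessity direction and the final $\varepsilon$-perturbation-plus-limit argument are routine by comparison. I would therefore emphasize that the equivalence is being invoked only for the compact sets relevant here, where the required Positivstellensatz is available, and that the entire weight of the converse rests on it together with Haviland's representation theorem.
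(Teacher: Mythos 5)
Your argument is correct, but note that the paper itself offers no proof of this statement: it is imported verbatim from the literature (Lasserre, Theorem 3.8), where it is Schm\"udgen's solution of the $K$-moment problem, so there is no in-paper argument to compare against. Your route --- necessity by direct integration against the representing measure, sufficiency by promoting nonnegativity on the preordering $T$ to nonnegativity on $\mathrm{Pos}(K)$ via Schm\"udgen's Positivstellensatz and an $\varepsilon$-shift, then invoking Haviland --- is the standard derivation and is essentially the proof given in the cited reference (Schm\"udgen's original argument went in the opposite direction, via spectral theory of self-adjoint operators, but the Positivstellensatz-plus-Haviland presentation is the one Lasserre uses). One genuinely valuable observation you make: the theorem as transcribed in the paper omits the hypothesis that $K$ be compact, without which the ``if'' direction is false in general; you are right that the entire weight of the converse rests on compactness through Schm\"udgen, and right that this is harmless here since the paper only applies the result to $K=\mathcal{S}^{n-1}$. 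Two minor points worth making explicit if you write this up: finiteness of the representing measure follows from $\mu(K)=L(1)=y_0<\infty$, and the $\varepsilon$-limit step needs nothing more than $L(1)$ being a fixed real, as you note.
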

	
	Now, let $L$ be a linear operator such that
	
	\begin{enumerate}
		\item $L(1)\le1$
		\item $L([\textbf{x}]_t[\textbf{x}]_t^T) \succeq 0 \;\forall t \in \mathbb{N}$
		\item $L(\textbf{x}^\alpha) = L(\textbf{x}^\alpha \norm{\textbf{x}}_2^2) \; \forall \alpha \in \mathbb{N}^n$
	\end{enumerate}
	
	and let $\mathcal{L}^\prime = \{ L : \mathbb{R}[\textbf{x}] \rightarrow \mathbb{R} : L \text{ satisfies } 1. \text{ - } 3. \}$. Recall that as a 
	semialgebraic set the sphere can be written 
	as $\mathcal{S}^{n-1} = \{ \textbf{x} \in \mathbb{R}^n : g_1(\textbf{x}) := 1- \norm{\textbf{x}}_2^2 \ge 0, g_2(\textbf{x}): = \norm{\textbf{x}}_2^2-1 \ge 0 \}$.
	Then for $K = \mathcal{S}^{n-1}$ every $L \in \mathcal{L}^\prime$ satisfies all conditions of Theorem \ref{repmeas}. To see this,
	note that the only possibilities for $J$ are $\{ \emptyset, \{1\}, \{2\}, \{1,2\} \}$. Because of condition 3 we have 
	that $L(\pm(1-\norm{\textbf{x}}_2^2)p)=0$ for all $p \in \mathbb{R[\textbf{x}]}$ covering all cases except $J = \emptyset$. 
	For $J = \emptyset$ the condition reduces to $L(p^2)\ge 0$ which holds for all $p \in \mathbb{R[\textbf{x}]}$ because 
	of Lemma \ref{lemma}. Hence, every $L \in \mathcal{L}^\prime$ has a representing measure whose support is contained in $\mathcal{S}^{n-1}$.

	\section{Concluding remarks}\label{conclusion}
	
	In this last section we conclude by outlining the connection of our results to previous work. We show that
	--- in the special case of polynomial optimization on the simplex --- our RLT hierarchy reduces to one studied earlier by Bomze and De Klerk \cite{deklerk2},
	and De Klerk, Laurent and Parrilo \cite{PTAS}.

	De Klerk, Laurent and Parrilo \cite{PTAS} introduced the following hierarchy for minimizing a homogeneous polynomial $p \in \mathbb{R}[\textbf{x}]$ of degree $d$ over the simplex.
	
	\begin{equation}\label{pr}
	\begin{split}
	p^{(r)} = \max \lambda \text{ s.t. } & \text{ the polynomial } \left( \sum_{i=1}^n x_i \right)^r\left(p(\textbf{x})-\lambda \left( \sum_{i=1}^n x_i \right)^d \right) \\ &\text{ has only nonneg. coefficients.}
	\end{split}
	\end{equation}
	
	It was proved that $\lim_{r \rightarrow \infty} p^{(r)} = p_{\min} = \min_{\textbf{x} \in \Delta_{n-1}} p(\textbf{x})$.
	The LP hierarchy introduced in section \ref{chapterLP} of this paper is a generalization of the hierarchy \eqref{pr}, in the sense made precise in the following theorem.
	
	\begin{thm}\label{equiv}
		For some homogeneous polynomial $p \in \mathbb{R}[\normalfont{\textbf{x}}]$ of degree $d$ let $\underline{f}_{\text{\normalfont LP}}^{(r+d)}$ be the solution to the LP relaxation of the problem
		\[ \min_{\normalfont{\textbf{x}} \in \Delta_{n-1}}p(\normalfont{\textbf{x}}) = \text{\normalfont val} = \inf_{\mu \in \mathcal{M}(\Delta_{n-1})_+} \left\{ \int_{\Delta_{n-1}}p(\normalfont{\textbf{x}}) \mathrm{d}\mu(\normalfont{\textbf{x}}) : \int_{\Delta_{n-1}}\mathrm{d}\mu(\normalfont{\textbf{x}}) = 1   \right\}  \]
		for some $r \in \mathbb{N}$. Then,
		\[p^{(r)} = \underline{f}_{\text{\normalfont LP}}^{(r+d)}.\]
	\end{thm}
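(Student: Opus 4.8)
The plan is to turn the infinite-dimensional GMP into its finite LP relaxation and then reduce both $\underline{f}_{\text{LP}}^{(r+d)}$ and $p^{(r)}$ to the same explicit minimum over the top-degree monomials. First I would homogenize the constraint $\int_{\Delta_{n-1}} \mathrm{d}\mu = 1$: on the simplex it equals $\int_{\Delta_{n-1}}\left(\sum_{i=1}^n x_i\right)^d \mathrm{d}\mu$, so the GMP has data $f_0 = p$, $f_1(\textbf{x}) = \left(\sum_{i=1}^n x_i\right)^d$ and $b_1 = 1$, both homogeneous of degree $d$. The relaxation \eqref{relax} at level $r+d$ then reads: minimize $L^{(r+d)}(p)$ subject to $L^{(r+d)}\left(\left(\sum_i x_i\right)^d\right) = 1$, $L^{(r+d)}(1)\le 1$, $L^{(r+d)}(\textbf{x}^\alpha)\ge 0$ for $\vert\alpha\vert\le r+d$, and the ideal constraints. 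Writing $y_\alpha = L^{(r+d)}(\textbf{x}^\alpha)$ turns this into a finite LP, and I expect that no appeal to strong duality or to the existence of a dual optimal solution is needed here: the statement is a purely finite-dimensional LP identity.

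The key reduction is to parameterize the feasible region by its top-degree moments. By Lemma \ref{lemma} (applied with $k = r+d-\vert\gamma\vert$), every feasible $L^{(r+d)}$ satisfies $y_\gamma = L^{(r+d)}\left(\textbf{x}^\gamma \left(\sum_i x_i\right)^{r+d-\vert\gamma\vert}\right) = \sum_{\vert\delta\vert = r+d-\vert\gamma\vert} \binom{r+d-\vert\gamma\vert}{\delta}\, y_{\gamma+\delta}$, so all lower moments are determined by $z_\beta := y_\beta$ with $\vert\beta\vert = r+d$. Conversely, I would check that any $z \ge 0$ lifts to a feasible $L^{(r+d)}$: the lower nonnegativities $y_\gamma \ge 0$ are immediate since the binomials are nonnegative, and the ideal constraints $y_\alpha = \sum_j y_{\alpha + e_j}$ follow from the multinomial recurrence $\binom{m}{\eta} = \sum_j \binom{m-1}{\eta - e_j}$. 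Substituting the pushdown formula into the objective and the normalization, and using the Vandermonde-type identity $\binom{r+d}{\beta} = \sum_{\alpha+\delta=\beta,\,\vert\alpha\vert=d}\binom{d}{\alpha}\binom{r}{\delta}$ (coefficient comparison in $\left(\sum_i x_i\right)^d \left(\sum_i x_i\right)^r = \left(\sum_i x_i\right)^{r+d}$), the whole LP collapses to
\[
\underline{f}_{\text{LP}}^{(r+d)} = \min\Big\{ \sum_{\vert\beta\vert=r+d} A_\beta z_\beta : \sum_{\vert\beta\vert=r+d}\binom{r+d}{\beta} z_\beta = 1,\ z \ge 0 \Big\}, \qquad A_\beta := \sum_{\substack{\alpha+\delta=\beta\\ \vert\alpha\vert=d}} \binom{r}{\delta} p_\alpha .
\]
This is a linear functional minimized over a rescaled simplex, so the optimum is attained at a vertex $z_\beta = 1/\binom{r+d}{\beta}$, giving $\underline{f}_{\text{LP}}^{(r+d)} = \min_{\vert\beta\vert=r+d} A_\beta / \binom{r+d}{\beta}$.

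Finally I would treat \eqref{pr} in the same coordinates. Since $p^{(r)}$ has the single variable $\lambda$, expanding $\left(\sum_i x_i\right)^r\left(p - \lambda\left(\sum_i x_i\right)^d\right)$ shows its coefficient of $\textbf{x}^\gamma$ equals $A_\gamma - \lambda \binom{r+d}{\gamma}$, where the factor multiplying $\lambda$ is again $\binom{r+d}{\gamma}$ by the same Vandermonde identity. Nonnegativity of every coefficient is thus equivalent to $\lambda \le A_\gamma/\binom{r+d}{\gamma}$ for all $\vert\gamma\vert = r+d$, whence $p^{(r)} = \min_{\vert\gamma\vert=r+d} A_\gamma/\binom{r+d}{\gamma}$, which coincides with the value computed for $\underline{f}_{\text{LP}}^{(r+d)}$; equivalently, one may observe that the simplex LP above is exactly the LP dual of \eqref{pr}. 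I expect the main obstacle to be the bookkeeping in the second step: verifying that the pushdown parameterization is a genuine bijection onto the feasible set (in particular that the ideal constraints are reproduced, not merely implied by, the top moments) and confirming the two multinomial identities, after which the equivalence is immediate.
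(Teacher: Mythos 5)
Your proof is correct, and it reorganizes the argument along a genuinely different (if closely related) route. The paper proves the two inequalities separately: for $p^{(r)} \le \underline{f}_{\text{LP}}^{(r+d)}$ it applies the optimal operator $L^{(r+d)}$ to the nonnegative-coefficient certificate $\left(\sum_i x_i\right)^r\left(p-\lambda^\ast\left(\sum_i x_i\right)^d\right)$ (a weak-duality argument), and for the reverse inequality it writes down the LP dual of \eqref{pr} and lifts its optimal solution to a feasible $L^{(r+d)}$ via exactly the pushdown $L^{(r+d)}(\textbf{x}^\alpha)=\sum_j y_{\alpha+e_j}$ that you use. You instead show that this pushdown is a bijection between the feasible set of \eqref{relax} and the rescaled simplex $\{z\ge 0 : \sum_{\vert\beta\vert=r+d}\binom{r+d}{\beta}z_\beta=1\}$, so both problems collapse to one and the same finite LP, whose value you then evaluate in closed form as $\min_{\vert\beta\vert=r+d}A_\beta/\binom{r+d}{\beta}$ with $A_\beta$ the coefficients of $\left(\sum_i x_i\right)^r p$. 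The ingredients (Lemma \ref{lemma}, the multinomial Pascal rule, the coefficient expansion) coincide with the paper's, but your packaging buys a little more: an explicit formula for both quantities, no appeal to LP strong duality for \eqref{pr} (which the paper uses implicitly when it passes from the max formulation to its dual and calls both $p^{(r)}$), and the correct observation that none of the GMP duality assumptions are needed here. The only step that genuinely needs to be written out — and you flag it yourself — is surjectivity of the lifting, i.e., that the ideal constraints $y_\alpha=\sum_j y_{\alpha+e_j}$ are reproduced by $y_\gamma=\sum_{\vert\delta\vert=r+d-\vert\gamma\vert}\binom{r+d-\vert\gamma\vert}{\delta}z_{\gamma+\delta}$; this follows from $\binom{m}{\delta}=\sum_j\binom{m-1}{\delta-e_j}$ exactly as you indicate, so there is no gap.
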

	
	\begin{proof}
			$"\le":$ Let $\lambda^\ast = p^{(r)}$ be optimal for (\ref{pr}). Then $\left( \sum_{i=1}^n x_i \right)^r\left(p(\textbf{x})-\lambda \left( \sum_{i=1}^n x_i \right)^d \right)$ has only negative coefficients and we find
			\begin{align*}
			0 &\le L^{(r+d)}\left( \left( \sum_{i=1}^n x_i \right)^r\left(p(\textbf{x})-\lambda^\ast \left( \sum_{i=1}^n x_i \right)^d \right) \right)\\
			&= L^{(r+d)}\left(\left(\sum_{i=1}^n x_i\right)^r p(\textbf{x})\right)	- \lambda^\ast L^{(r+d)}\left( \left( \sum_{i=1}^{n} x_i \right)^{r+d}\right) \\
			&= \underline{f}_{\text{\normalfont LP}}^{(r+d)} - \lambda^\ast
			\end{align*}
			for $L^{(r+d)}$ being the optimal solution to the LP relaxation. \\
			$"\ge":$
			For the multinomial coefficient \[ {{k}\choose{\alpha}} = {{k}\choose{\alpha_1, \dots, \alpha_n}} =  \frac{k!}{\alpha_1! \dots \alpha_n!}\] we define ${{k}\choose{\alpha}} = 0$ if $\alpha_i < 0$ for some $i \in [n]$.
			
			Consider the expansion
			\begin{align*}
			 \left( \sum_{i=1}^n x_i \right)^r\left(p(\textbf{x})-\lambda \left( \sum_{i=1}^n x_i \right)^d \right) 
			& =  \sum_{\vert \beta \vert = r} {{r}\choose{\beta}}\textbf{x}^\beta \sum_{\vert \alpha \vert = d} p_{\alpha}\textbf{x}^\alpha - \lambda \sum_{\vert \beta \vert =r+d} {{r+d}\choose{\beta}}\textbf{x}^\beta \\
			&= \sum_{\vert \beta \vert = r+d} \left(\sum_{\vert \alpha \vert = d}{{r}\choose{\beta-\alpha}} p_\alpha-\lambda {{r+d}\choose{\beta}} \right)\textbf{x}^\beta.
			\end{align*}
			Thus the LP formulation of (\ref{pr}) reads
			\begin{align*}
			p^{(r)} = \max & \enskip \lambda \\
			\text{s.t. }& {{r+d}\choose{\beta}}\lambda \le \sum_{\vert \alpha \vert = d} {{r}\choose{\beta-\alpha}}p_\alpha \quad \forall \vert \beta \vert = r+d
			\end{align*}
			with its dual
			\begin{align*}
			p^{(r)} = \min \enskip & \sum_{\vert \beta \vert= r+d}\sum_{\vert \alpha \vert = d} y_\beta {{r}\choose{\beta-\alpha}}p_\alpha \\
			\text{s.t. }& y_\beta \ge 0 \quad \forall \vert \beta \vert = r+d \\
			& \sum_{\vert \beta \vert = d+r}{{r+d}\choose{\beta}} y_\beta = 1.
			\end{align*}
			Let $y$ be an optimal solution for the dual and define
			\[ L^{(r+d)}(\textbf{x}^\beta) = y_\beta \quad \forall \vert \beta \vert = r+d. \]
			Then for $\vert \alpha \vert = r+d-1$ we let
			\[ L^{(r+d)}(\textbf{x}^\alpha) = \sum_{i = 1}^n y_{\alpha + e_i} \]
			and proceed in this manner for all $\vert \gamma \vert \le r+d-2$. The last constraint of the dual then implies
			\[ 1= \sum_{\vert \beta \vert = d+r}{{r+d}\choose{\beta}} y_\beta =  \sum_{\vert \beta \vert = d+r}{{r+d}\choose{\beta}} L^{(r+d)}(\textbf{x}^\beta) = L^{(r+d)}\left( \left(\sum_{i=1}^n x_i \right)^{r+d} \right).
			\]
			By construction we have
			\begin{enumerate}
				\item $L^{(r+d)}(\textbf{x}^\alpha) \ge 0$ for all $\vert \alpha \vert \le r+d$
				\item $L^{(r+d)}(\textbf{x}^\alpha) = L^{(r+d)}\left(\textbf{x}^\alpha \sum_{i=1}^n x_i \right)$ for all $\vert \alpha \vert \le r+d-1$
				\item $1 = L^{(r+d)}\left( \left(\sum_{i=1}^n x_i \right)^{r+d} \right) \overset{2.}{=} L^{(r+d)}(1).$
			\end{enumerate}
			Hence, the constructed solution for the LP relaxation is feasible. Further,
			\begin{align*}
			p^{(r)} &= \sum_{\vert \beta \vert= r+d}\sum_{\vert \alpha \vert = d} y_\beta {{r}\choose{\beta-\alpha}}p_\alpha \\
			& =  \sum_{\vert \beta \vert= r+d}\sum_{\vert \alpha \vert = d} L^{(r+d)}(\textbf{x}^\beta) {{r}\choose{\beta-\alpha}}p_\alpha \\
			&= L^{(r+d)}\left( \sum_{\vert \beta \vert= r+d}\sum_{\vert \alpha \vert = d}  {{r}\choose{\beta-\alpha}}p_\alpha \textbf{x}^\beta \right) \\
			& = L^{(r+d)}\left( \left(\sum_{i=1}^n x_i \right)^{r} p \right) \\
			& = L^{(r+d)}(p) \ge \underline{f}_{\text{LP}}^{(r+d)}.
			\end{align*} 
	\end{proof}

	%
	%
	%
	
	In the case of polynomial optimization our estimate \eqref{thmequ} becomes
	\[ f_{\min} - \underline{f}_{\text{LP}}^{(r+d)} \le \frac{d(d-1)}{2(r+d-1)-d(d-1)}(B(f)-f_{\min}) \]
	and applying the inequality 
	\[
	B(p)-p_{\min} \le {{2d-1}\choose{d}}d^d\left(p_{\max}-p_{\min}\right),
	\]
	shown in \cite[Theorem 2.2]{PTAS},  we find
	\[ f_{\min} - \underline{f}_{\text{LP}}^{(r+d)} \le \frac{d(d-1)}{2(r+d-1)-d(d-1)}{{2d-1}\choose{d}}d^d\left(f_{\max}-f_{\min}\right). \]
	This is essentially the same result as was obtained in \cite[Theorem 1.3]{PTAS}.

	\bibliographystyle{plain}       
	\bibliography{references2}   
	

	\end{document}